\documentclass[10pt] 
{amsart}
\usepackage{amsmath, amssymb, amsfonts, mathrsfs}
\usepackage{mathtools}

\makeatletter
\@namedef{subjclassname@2020}{\textup{2020} Mathematics Subject Classification}
\makeatother

\usepackage{epsfig}
\usepackage{color}
\usepackage{epstopdf}
\usepackage{graphicx}
\usepackage{srcltx}
\usepackage[mathscr]{eucal}
\usepackage{enumerate}
\usepackage{enumitem}
\usepackage{verbatim}
\usepackage[draft]{todonotes}
\usepackage{relsize}
\usepackage{kotex}
\usepackage
{hyperref}
\usepackage{cases} 
\usepackage{empheq}
\usepackage{subfigure}
\usepackage{tikz}
\usepackage{tikz-3dplot}
\usepackage{array}
\usepackage{tabularx}
\newcolumntype{C}[1]{>{\centering\arraybackslash}p{#1}}

\usetikzlibrary{arrows,automata}
\usepackage{varwidth}
\usepackage{tikz}
\usetikzlibrary{backgrounds,patterns,calc}
\usepackage{xparse}

\usepackage[normalem]{ulem}
\newcommand{\stkout}[1]{\ifmmode\text{\sout{\ensuremath{#1}}}\else\sout{#1}\fi}

\hypersetup{
	colorlinks=true,       
	linkcolor=blue,          
	citecolor=red,        
	filecolor=magenta,      
	urlcolor=cyan           
}

\usetikzlibrary{calc, arrows.meta, decorations.pathreplacing}

\newcommand{\Be}{\begin{equation}}
\newcommand{\Ee}{\end{equation}}
\newcommand{\Bea}{\begin{eqnarray}}
\newcommand{\Eea}{\end{eqnarray}}
\newcommand{\Bel}{\begin{align}}
\newcommand{\Eel}{\end{align}}
\newcommand{\Beas}{\begin{eqnarray*}}
	\newcommand{\Eeas}{\end{eqnarray*}}
\newcommand{\Benu}{\begin{enumerate}}
	\newcommand{\Eenu}{\end{enumerate}}
\newcommand{\Bi}{\begin{itemize}}
	\newcommand{\Ei}{\end{itemize}}

\def\abs#1{|#1|}

\numberwithin{equation}{section}
\newcommand{\supp} {\operatorname{supp}}

\theoremstyle{plain}
\newtheorem{theorem}{Theorem}[section]

\newtheorem{lemma}[theorem]{Lemma}
\newtheorem{proposition}[theorem]{Proposition}

\theoremstyle{remark}

\theoremstyle{definition}

\definecolor{ao}{rgb}{0, 0.5, 0}

\newcommand{\R}{\mathbb R}

\newcommand{\mm}{{M_\mu}}
\newcommand{\fa}{\mathfrak a}
\newcommand{\ts}{{2^{-j}t}}
\newcommand{\cI}{\mathcal I}
\newcommand{\tsk}{{2^kr}}

\newcommand{\etk}{\mathcal E^N}

\usepackage{tikz}
\usepackage{pgfplots}
\pgfplotsset{compat=1.13,
tick label style={color=white},
  label style={font=\small},
  legend style={font=\small}}

\usepackage{mathtools}

\title[On Rubio de Francia's maximal theorem]
{On Rubio de Francia's maximal theorem}
\author{Seheon Ham}
\author{Jiwon Kah}
\author{Sanghyuk Lee}
\author{Ji Li}

\address{Department of Mathematical Sciences and RIM, Seoul National University, Seoul 08826, Republic of Korea}
\email{seheonham@snu.ac.kr}
\email{wldnjskah@snu.ac.kr}
\email{shklee@snu.ac.kr}
\address{School of Mathematical and Physical Sciences, Macquarie University, NSW, 2109, Australia}
\email{ji.li@mq.edu.au}

\keywords{maximal bound,  general measure, weak-type endpoint estimate}

\subjclass[2020]{42B25, 42B30, 28A78}

\begin{document}

\maketitle

\begin{abstract}  In his influential 1986 paper, Rubio de Francia established   $L^p$ bounds for the maximal
function generated by dilations of measures $\mu$ whose Fourier transforms $\widehat{\mu}$ satisfy specific decay condition.
In the present work, we obtain results that complement his work in several directions.
In particular, we obtain restricted weak-type endpoint bound  on  the maximal function and
$L^p$--$L^q$ bounds on its local variant. We also investigate  how  Frostman’s growth condition  on the measure influences those maximal bounds.  
While a key feature of Rubio de Francia’s result is that $L^p$ boundedness is determined
solely by the decay order of $\widehat{\mu}$,  
we show that the Frostman condition plays a significant role when the growth order exceeds $d-1$ or when $L^p$--$L^q$ estimates are considered.

\end{abstract}

\section{Introduction}

Let $\mu$ be a compactly supported Borel measure in $\mathbb R^d$, $d\ge 1$.  For $t>0$,  let us denote  by $\mu_t$   its dilation given by 
\[  ( \mu_t,\phi) = \int    \phi(t y )\,  d\mu(y) \] 
for $\phi\in C_c(\mathbb R^d)$. Consider the maximal function that is generated by dilations of the measure  $\mu$: 
\[
M_\mu f(x) = \sup_{t>0} | f\ast\mu_t |. 
\] 


The following theorem concerning  $L^p$ boundedness for maximal operator   was proved by  Rubio de Francia in his influential paper  \cite{Ru86}.

\begin{theorem}[{\cite[Theorem A]{Ru86}}]\label{RF}
Suppose that $\mu$ is a compactly supported Borel measure and 
\begin{equation}\label{decay}
|\widehat\mu(\xi)|\le C |\xi|^{-a}   
\end{equation} 
with $a >\frac12$.
Then,  for all $ p >p_a := \frac{ 2a+1}{2a}$,   we have 
\begin{equation}\label{maximal}
 \| \mm f  \|_{L^p(\mathbb R^d)} \le C \|f\|_{L^p(\mathbb R^d)}. 
\end{equation}
\end{theorem}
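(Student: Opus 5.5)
We follow Rubio de Francia's original scheme: a Littlewood--Paley decomposition of $\mu$, square-function estimates on $L^2$, and interpolation with $L^1$-type bounds that lose only a power of $2^j$. Fix $\phi\in C_c^\infty$ with $\widehat\phi=1$ near the origin. Split
\[
\mu=\mu\ast\phi+\sum_{j\ge1}\mu^j,\qquad \widehat{\mu^j}(\xi)=\widehat\mu(\xi)\beta(2^{-j}\xi),
\]
where $\beta$ is supported in the annulus $\{|\xi|\sim1\}$. The piece $\mu\ast\phi$ is a Schwartz-type function (since $\mu$ has compact support), so its maximal operator is dominated by the Hardy--Littlewood maximal function and is bounded on all $L^p$, $p>1$. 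It then suffices to show
\[
\Big\|\sup_{t>0}|f\ast\mu^j_t|\Big\|_p\le C2^{-\epsilon j}\|f\|_p
\]
for some $\epsilon=\epsilon(p)>0$ whenever $p>p_a$, and sum in $j$.

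\textbf{$L^2$ estimate.} We use the Sobolev embedding in $t$ (the standard device):
\[
\sup_t|F(t)|^2\le |F(1)|^2+2\Big(\int|F|^2\frac{dt}{t}\Big)^{1/2}\Big(\int|tF'|^2\frac{dt}{t}\Big)^{1/2}.
\]
With $F(t)=f\ast\mu^j_t$, Plancherel and the decay condition \eqref{decay} give
\[
\int\!\!\int |f\ast\mu^j_t|^2\,dx\,\frac{dt}{t}\lesssim 2^{-2aj}\|f\|_2^2,
\]
because $|\widehat\mu(t\xi)\beta(2^{-j}t\xi)|^2$ is supported where $|t\xi|\sim 2^j$. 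For the derivative, $t\partial_t$ gives at most an extra factor $2^j$; this is clear on the Fourier side using that $\nabla\widehat\mu$ is bounded since $\mu$ is compactly supported. Hence $\|\sup_t|f\ast\mu^j_t|\|_2\lesssim 2^{(1/2-a)j}\|f\|_2$, which decays precisely because $a>1/2$.

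\textbf{$L^1$ endpoint.} The kernel $\mu^j=\mu\ast\check\beta_j$ satisfies $\|\mu^j\|_1\lesssim1$ and $|\nabla\mu^j|\lesssim 2^j$ times a rapidly decaying bump, by compact support of $\mu$. Viewing $f\mapsto\{f\ast\mu^j_t\}_t$ as a vector-valued singular integral (Hörmander condition in $L^\infty(dt)$), we expect a weak-type $(1,1)$ bound with constant $\lesssim j\,2^{j}$ or $\lesssim 2^{j}$. The cleanest route is to check $\int_{|x|>2|y|}\sup_t|\mu^j_t(x-y)-\mu^j_t(x)|\,dx\lesssim 2^j$: split the $t$-range at $t\sim 2^j|y|$, using the gradient bound for small $t$ and the $L^1$ tails for large $t$. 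Then interpolate (Marcinkiewicz, possibly after linearizing the sup) the weak $(1,1)$ bound $2^{j}$ with the $L^2$ bound $2^{(1/2-a)j}$. With $1/p=\theta+(1-\theta)/2$ the exponent is $\theta+(1-\theta)(1/2-a)$, which is negative iff $\theta<(2a-1)/(2a+1)$, i.e.\ iff $p>\frac{2a+1}{2a}=p_a$. For $p\ge2$, interpolate instead with the trivial $L^\infty$ bound. Summing the geometric series in $j$ gives \eqref{maximal}.

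\textbf{Main obstacle.} The hard step is the weak $(1,1)$ estimate with only polynomial loss $2^{j}$ (not worse) and uniformly over all dilations. This needs a careful Calderón--Zygmund treatment of the vector-valued kernel $t\mapsto\mu^j_t$, since $\mu$ itself has no regularity; all smoothness comes from the frequency cutoff $\beta(2^{-j}\cdot)$. If a $\log$ factor appears in the constant it is harmless, since it is absorbed by the $2^{-\epsilon j}$ gain.
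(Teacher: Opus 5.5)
Your scheme (pieces $\mu^j$, an $L^2$ maximal bound $2^{(1/2-a)j}$, an $L^1$-type bound losing $2^j$ or $j2^j$, then interpolation and summation) is Rubio de Francia's, and it is essentially the one carried out in Section~\ref{sec:endp}. Your exponent bookkeeping at the end is correct. The genuine gap is in the $L^2$ step. Consider
\[
t\partial_t\bigl[\widehat\mu(t\xi)\beta(2^{-j}t\xi)\bigr]=(t\xi\cdot\nabla\widehat\mu)(t\xi)\,\beta(2^{-j}t\xi)+\widehat\mu(t\xi)\,(2^{-j}t\xi\cdot\nabla\beta)(2^{-j}t\xi).
\]
The first term lives where $|t\xi|\sim 2^j$, so mere boundedness of $\nabla\widehat\mu$ gives only $\|tF'\|_{L^2(dx\,dt/t)}\lesssim 2^{j}\|f\|_2$, not $2^{(1-a)j}\|f\|_2$. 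Your Sobolev inequality then yields
\[
\|\sup_t|f\ast\mu^j_t|\|_2\lesssim (2^{-aj}2^{j})^{1/2}\|f\|_2=2^{(1-a)j/2}\|f\|_2,
\]
which does not even decay for $a\le 1$ and cannot produce the range $p>p_a$.

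The missing fact is $|\nabla\widehat\mu(\eta)|\lesssim|\eta|^{-a}$. Here $\partial_l\widehat\mu$ is the Fourier transform of the compactly supported measure $-2\pi i x_l\,d\mu$, and multiplying $\mu$ by a smooth function preserves \eqref{decay}. To see this, write $\widehat{\psi\mu}=\widehat{\psi\chi}\ast\widehat\mu$ with $\chi\in C_c^\infty$ equal to $1$ on $\supp\mu$, and split the convolution according to whether $|\eta|<|\xi|/2$. This is the paper's Lemma~\ref{decay-}, used for exactly this purpose. With it the first term is $O(2^{(1-a)j})$, and your bound $2^{(1/2-a)j}$ follows.

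Second, in the $L^1$ step you assign the two $t$-ranges the wrong way round. The cancellation bound carries a factor $2^j|y|/t$, so it is effective only for $t\gtrsim 2^j|y|$, where the dyadic blocks sum to $O(2^j)$. Used on $|y|\lesssim t\lesssim 2^j|y|$ it costs about $2^{2j}$, which after interpolation no longer reaches $p_a$. Conversely, size bounds alone fail for large $t$: already for the sphere, $\int_{|x|>2|y|}\sup_{t\ge 2^j|y|}|\mu^j_t(x)|\,dx=\infty$.

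On $|y|\lesssim t\lesssim 2^j|y|$ you must bound $\mu^j_t(x-y)$ and $\mu^j_t(x)$ separately by size, and this is not a tail estimate. For $t\sim|x|$ the point $x$ can lie in $t\,\supp\mu$, and the loss comes from the $\sim 2^j$ essentially distinct positions of the kernel in each dyadic $t$-block. Done pointwise in $y$, this gives $j2^j$ (Rubio de Francia's constant), which suffices for this theorem. The paper gets $2^j$ by working with atoms and first smoothing at scale $r$ (Proposition~\ref{lem:H1-tj}).

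With these two corrections your argument goes through. One small slip as well: you need $\widehat\phi\in C_c^\infty$, not $\phi\in C_c^\infty$, since the latter is incompatible with $\widehat\phi=1$ near $0$.
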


The most typical example aligned with Theorem \ref{RF}  is the spherical maximal function  that is defined by 
\[ M_{sph} f= \sup_{t>0} |f\ast  \sigma_t|,\] 
where $\sigma$ is the normalized surface measure on $\mathbb S^{d-1}$.   
Theorem \ref{RF}  can be regarded as a natural generalization of 
Stein's celebrated spherical maximal theorem which tells that  the spherical maximal function $M_{sph}$   is bounded on $L^p$ 
 if and only if $p>p_{(d-1) /2}=d/(d-1)$.  This was shown by Stein \cite{St76} for $d\ge 3$, and later by  Bourgain \cite{B} for $d=2$. 
 
A result in a similar vein, relying on the decay condition \eqref{decay}, was
obtained earlier by Greenleaf~\cite[Theorem~2]{Gr} for measures $\mu$ supported on
smooth hypersurfaces. While Greenleaf’s approach relies on the specific geometric
structure of hypersurfaces, a notable feature of Theorem~\ref{RF} is that the
admissible range of $p$ is determined solely by the decay rate of the Fourier
transform of $\mu$. Consequently, Theorem~\ref{RF} remains valid for any compactly
supported measure satisfying \eqref{decay},  thereby ensuring its broad applicability across a wide range of settings.


\subsection{Endpoint estimate for $p=p_a$} 
In general, the range  $p> p_a$ is also sharp 
in that there are measures satisfying \eqref{decay} but  
the maximal estimate \eqref{maximal} fails if $p\le p_a$  for a certain $a$.     Indeed, the optimality of $p_a$ was shown  in  \cite[p. 397]{Ru86} when $a = k/2$ for integers $k$ satisfying $1 <  k \le d-1$ by making use of the usual example from \cite{St76}.

Regarding the critical exponent $p=d/(d-1)$ for the spherical maximal function,   Bourgain 
proved that   $M_{sph}$ is bounded from $L^{d/(d-1),1}$ 
to $L^{d/(d-1),\infty}$ when $d\ge 3$ \cite{B85}.  This endpoint result is also optimal from the viewpoint of  the second 
exponent of the Lorentz spaces. More precisely,    $M_{sph}$ cannot be  bounded from $L^{d/(d-1),r}$ 
to $L^{d/(d-1),\infty}$ for any $r>1$ (see  Proposition \ref{lorentz} below).  Nevertheless, such an estimate is not possible in the case $d=2$, as was shown in \cite{STW}.

It is natural to expect that an analogous endpoint result holds for the maximal operator $M_\mu$ satisfying \eqref{decay}. However, in contrast to the case of the spherical maximal function, to the best of the authors’ knowledge, no such endpoint estimate at $p = p_a$ has been established for $M_\mu$. 
  
  The first result in this paper is the following which establishes  the restricted weak-type estimate at the endpoint $p=p_a$.

\begin{theorem}\label{endp}
Suppose that $\mu$ is a compactly supported Borel measure and satisfying \eqref{decay} with $a >\frac12$. 
Then, we have 
\Be \label{endpt}  \| \mm f\|_{L^{p_a, \infty}(\R^d)} \le C \| f\|_{L^{p_a,1}(\R^d)}.  \Ee 
\end{theorem}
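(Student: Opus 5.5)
We will deduce \eqref{endpt} from a Bourgain-type interpolation argument applied to a Littlewood--Paley decomposition of $\mu$. Fix $\phi_0\in C_c^\infty$ with $\phi_0=1$ near the origin, set $\phi_j=\phi_0(2^{-j}\cdot)-\phi_0(2^{-j+1}\cdot)$ for $j\ge1$, and write $\mu=\sum_{j\ge0}\mu^j$ with $\widehat{\mu^j}=\phi_j\widehat\mu$. Then $\mm f\le \sum_j M_j f$, where $M_jf=\sup_{t>0}|f\ast\mu^j_t|$. The piece $M_0$ is dominated by the Hardy--Littlewood maximal function, since $\mu^0=\mu\ast\check\phi_0$ and $\mu$ is compactly supported, so $|\mu^0|$ is bounded by a radially decreasing integrable function. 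We will therefore establish two estimates for $j\ge1$ with opposite exponential behavior and then sum using Bourgain's interpolation trick, which gives the restricted weak type bound at the interpolated exponent.

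\textbf{$L^2$ estimate.} Split $\sup_{t>0}$ into dyadic ranges $t\in[2^k,2^{k+1}]$. For each range, apply the standard Sobolev embedding in $t$, namely $\sup_t|F(t)|^2\le |F(t_0)|^2+2\|F\|_{L^2}\|\partial_tF\|_{L^2}$ on a unit interval after rescaling. Combine this with Plancherel, the decay \eqref{decay}, and the bound $|\nabla\widehat{\mu^j}|\lesssim |\widehat\mu|\,2^{-j}+|\phi_j\nabla\widehat\mu|$. Here $\nabla\widehat\mu$ is bounded because $\mu$ is compactly supported, but it need not decay; only $|\widehat\mu|$ decays. So we must use $t\partial_t(\widehat\mu(t\xi))=t\xi\cdot\nabla\widehat\mu(t\xi)$, which is bounded by $2^j$. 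This gives $\|M_jf\|_2\lesssim (2^{-2ja}\cdot 2^{j})^{1/2}\|f\|_2= 2^{-j(a-\frac12)}\|f\|_2$, with the square function over $k$ controlled by orthogonality (the supports in $\xi$ of the dyadic-scale pieces have bounded overlap). This is essentially Rubio de Francia's $L^2$ step.

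\textbf{$L^1\to L^{1,\infty}$ estimate.} We need $\|M_jf\|_{L^{1,\infty}}\lesssim 2^{j}\|f\|_1$. The kernel $\mu^j=\mu\ast 2^{jd}\check\phi(2^j\cdot)$ satisfies $|\mu^j(x)|\lesssim 2^{jd}\int (1+2^j|x-y|)^{-N}d\mu(y)$. Using no growth condition on $\mu$ beyond finiteness, we only have $|\mu^j|\lesssim 2^{jd}(1+|x|)^{-N}$, and this would give the much worse factor $2^{jd}$. The key point is to use instead the decay \eqref{decay} via the $L^2$ bound, together with a Calderón--Zygmund decomposition in the single-scale form of Bourgain/Seeger--Tao--Wright. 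For $f=\chi_E$, decompose $\mu^j=\mu^j_{\le}+\mu^j_{>}$ into the parts near and far from $\operatorname{supp}\mu$, and handle the far part by the maximal function. For the near part, use the $L^2$ bound at scale $2^{-j}$. Concretely, following Bourgain's restricted weak type argument, it suffices to show
\[
\big|\{M_jf>\lambda\}\big|\lesssim \lambda^{-1}2^{j}\|f\|_1,\qquad \big|\{M_jf>\lambda\}\big|\lesssim \lambda^{-2}2^{-2j(a-\frac12)}\|f\|_2^2.
\]
Applying the elementary interpolation lemma (if $\|T_j\|_{L^{p_0}\to L^{p_0,\infty}}\le C2^{j\epsilon_0}$ and $\|T_j\|_{L^{p_1}\to L^{p_1,\infty}}\le C2^{-j\epsilon_1}$, then $\sum T_j$ is restricted weak type at $\frac1p=\frac{\epsilon_1/p_0+\epsilon_0/p_1}{\epsilon_0+\epsilon_1}$) with $p_0=1$, $\epsilon_0=1$, $p_1=2$, $\epsilon_1=a-\frac12$ gives $\frac1p=\frac{a-\frac12+\frac12}{a+\frac12}=\frac{2a}{2a+1}$, i.e.\ $p=p_a$.

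\textbf{Main obstacle.} The main obstacle is the $L^1$ bound with growth only $2^{j}$. The maximal function over $t$ in a dyadic range costs one derivative in $t$, which gives the factor $2^j$ after the fundamental theorem of calculus: $\sup_{t\in[1,2]}|f\ast\mu^j_t|\le |f\ast\mu^j_1|+\int_1^2|f\ast\partial_t\mu^j_t|dt$. However, $\|\mu^j\|_{L^1}$ need not be $O(1)$ for a singular $\mu$ (it is $O(1)$ here since $|\mu^j|\le |\mu|\ast 2^{jd}|\check\phi(2^j\cdot)|$, whose $L^1$ norm is at most $\|\mu\|\,\|\check\phi\|_1$). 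So a single scale costs $O(2^j)$ in $L^1$. The difficulty is summing over dyadic $t$-ranges in weak $L^1$. To do this, we will run a Calderón--Zygmund decomposition of $f$ at height $\lambda$ with cubes of side $\ge 2^{-j}2^k$ adapted to scale $2^k$. Good parts are handled by $L^2$, and bad parts at scales larger than $2^{k-j}$ are handled via cancellation of the bad functions against the smoothness of $\mu^j_{2^k}$ at scale $2^{k-j}$, giving a geometric sum. If this proves too delicate, we will instead try to prove the interpolation input in the weaker form needed by Bourgain's lemma directly for characteristic functions.
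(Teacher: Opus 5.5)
Your reduction is sound. Bourgain's summation trick tolerates weak-type inputs when the constants are geometric in $j$, and the choice $p_0=1,\ \epsilon_0=1,\ p_1=2,\ \epsilon_1=a-\frac12$ lands exactly on $p_a$.

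\textbf{The gap.} The estimate $\|M_jf\|_{L^{1,\infty}}\lesssim 2^j\|f\|_1$ carries the entire content of the endpoint result, and you do not prove it. Your Calder\'on--Zygmund sketch is not yet an argument, and its natural implementation reproduces Rubio de Francia's $j2^j$. Any such factor $j$ destroys the balance at $p=p_a$.

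Concretely, take a bad function $b_Q$, which is controlled only in $L^1$, and dyadic ranges $t\sim 2^k$ with $2^{k-j}\lesssim \ell(Q)\lesssim 2^k$.
\begin{itemize}
\item In this range the kernel $\mu^j_t$ varies at scale $2^{k-j}\le \ell(Q)$, so the mean zero of $b_Q$ gives nothing. Cancellation against $\mu^j_{2^k}$ helps only when $\ell(Q)\ll 2^{k-j}$, whereas your sketch applies it to cubes of side $\gtrsim 2^{k-j}$.
\item The size bound $\sup_y\int\sup_{t\sim 2^k}|\mu^j_t(x-y)|\,dx\approx 2^j$ then costs $2^j\|b_Q\|_1$ for each of the $\approx j$ such ranges, giving $j2^j$.
\end{itemize}
Your sketch never says how this regime is handled. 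Invoking \eqref{decay} through the $L^2$ bound does not help here: the $2^j$ bound has nothing to do with Fourier decay.

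\textbf{How the paper proceeds.} It avoids weak type $(1,1)$ altogether. It proves $\|T_j^*\fa\|_{L^1}\lesssim 2^j$ for $H^1$ atoms (Proposition~\ref{lem:H1-tj}). It then interpolates with the $L^2$ bound to get the strong bounds \eqref{eq:tj} for $1<p\le 2$, and applies Lemma~\ref{B-Sum-Tri} with $p_0>p_a>p_1$.

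The idea you are missing is the atom normalization $|\fa|\le |B|^{-1}$.
\begin{itemize}
\item For $t\le 2^jr$ it gives $\mathcal E^N_{2^{-j}t}*|\fa|\lesssim \mathcal E^N_r$ (see \eqref{eq:gt-majorized}). So the kernel is effectively smoothed at the fixed scale $r$ rather than at $2^{-j}t$.
\item By the locally constant property (Lemma~\ref{l-constant}), $\sup_{t\le 2^jr}\mathcal E^N_r(x-tu)$ is dominated by about $2^j$ translates of $\mathcal E^N_r$. This gives $2^j$ with no logarithm.
\item For $t>2^jr$, cancellation of $\fa$ together with the gradient bound for $K_j^t$ gives a geometric sum over dyadic $t$.
\end{itemize}

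This does not transfer directly to your weak-type formulation. A bad function of $\chi_E$ at height $\lambda$ has size up to $1$ but mass only $\lesssim\lambda|Q|$. The atomic bound then yields $\|M_jb\|_{L^1}\lesssim 2^j|E|/\lambda$, hence only $2^j|E|/\lambda^2$ for the level set.

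\textbf{A second problem, in your $L^2$ step.} As written it is inconsistent. If $\nabla\widehat\mu$ were merely bounded, as you assert, your Sobolev argument would give $(2^{-ja}\cdot 2^j)^{1/2}=2^{j(1-a)/2}$, not $2^{-j(a-\frac12)}$. The claimed bound is nevertheless true, because $\nabla\widehat\mu$ does decay like $|\xi|^{-a}$. Indeed $x_l\,d\mu=\psi x_l\,d\mu$ for a smooth cutoff $\psi$, and $\widehat{\psi\mu}$ inherits \eqref{decay} (Lemma~\ref{decay-}).
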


Moreover,  by combining  the argument in \cite[p.397]{Ru86} and Proposition \ref{lorentz-s} below, one can show that \eqref{endpt} cannot be extended to any larger Lorentz spaces $L^{p_a,r}$ for $r> 1$ (see Section \ref{sss})  when $a = k/2$  for integers $k$ satisfying $1<  k \le d-1$.

\subsection{Frostman condition}
The decay condition \eqref{decay} is related to geometric properties of the support of $\mu$, such as curvature. 
For example, if $\mu$ is compactly supported in a $k$-dimensional 
smooth submanifold satisfying certain non-degenerate condition, then \eqref{decay} holds with $a = k/2$ (see the discussion around \eqref{strongcurvature}).
For a general probability measure $\mu$, \eqref{decay}  implies that Hausdorff dimension of the support of $\mu$ is at least $2a$. 
Conversely, if $\mu$ is compactly supported in a set $E$ with Hausdorff dimension $s$, the decay rate $a$ in \eqref{decay} cannot exceed $s/2$.  (For example, see \cite{Ma}.)

Recently, considerable effort has been devoted to extending the classical results on Fourier restriction estimates to general measures beyond smooth measures supported on submanifolds \cite{Moc, Mit, BS, HaLa, Ch14, Ch16, CS, LW, FHR1, FHR2} (see also \cite{LP09, LP11}). 
In these studies, in addition to the Fourier decay condition \eqref{decay}, one typically imposes the Frostman condition
\begin{equation}\label{dim}
0< \mu(B(x,r)) \le C r^b,
\end{equation}
for some $0<b\le d$, which  serves to quantify the geometric properties of the support of $\mu$. 
Here, $B(x,r)$ denotes the ball of radius $r$ centered at $x\in\mathbb R^d$.
By Frostman's lemma, the condition \eqref{dim} implies that the Hausdorff dimension of the support of $\mu$ is at least $b$.
However, in general, there is no direct implication between the analytic Fourier decay condition \eqref{decay} and the geometric size condition \eqref{dim}.

In particular, the $L^2$ Fourier restriction estimate
\begin{equation}\label{TS}
\| \widehat{g\:\!d\mu} \|_{L^q} \le C \|g\|_{L^2(d\mu)}
\end{equation}
has been studied extensively under the conditions \eqref{decay} and \eqref{dim} by several authors \cite{Moc, Mit, BS}. Within this framework, the sharp range of exponents $q$ for which \eqref{TS} holds has also been investigated \cite{HaLa, Ch16, FHR1, FHR2} (see also \cite{CS, SS18, LW}). Furthermore, the existence of arithmetic progressions in the supports of fractal measures has been explored under the same assumptions \eqref{decay} and \eqref{dim} \cite{LP09, Sh17, CLP, HLP}. A common feature of these results is their reliance on both \eqref{decay} and \eqref{dim}, with the final conclusions depending heavily on both parameters $a$ and $b$. More recently, Carnovale, Fraser, and Orellana \cite{CFO} demonstrated that the estimate \eqref{TS} can be further refined by incorporating additional information regarding the Fourier spectrum of the measure.

However, as already discussed above, the maximal estimate \eqref{maximal} in Theorem \ref{RF} is independent of the Frostman condition \eqref{dim}. More precisely, assuming the additional condition \eqref{dim} with some $b \le d-1$ does not affect the range of $L^p$ boundedness, as will be shown below. Interestingly, a transition occurs when the Frostman condition \eqref{dim} holds with $b > d-1$; in this regime, the range of $L^p$ boundedness begins to depend explicitly on the parameter $b$.

Our second result characterizes this dependence by quantifying how the admissible range for the maximal bound \eqref{maximal} can be extended under the assumption \eqref{dim} for $b > d-1$. This phenomenon stands in sharp contrast to the case $b \le d-1$, where the Frostman condition plays no role in determining the $L^p$ range.

\begin{theorem}\label{decay+dim}
Let $\mu$ be a compactly supported positive Borel measure in $\mathbb R^d$ satisfying \eqref{decay} with $a >1/2$ and \eqref{dim} with $d-1< b \le d$.
Then,   we have  the estimate \eqref{maximal}  for 
\Be
\label{pab}  p >  p_{(a,b)}:= \frac{ 2(d-b) + 2a -1}{d-b+2a-1}.
\Ee
Moreover, we have the endpoint estimate  
\Be\label{rsweak}  \| \mm f\|_{L^{p_{(a,b)}, \infty}(\R^d)} \le C \| f\|_{L^{p_{(a,b)},1}(\R^d)}.\Ee
\end{theorem}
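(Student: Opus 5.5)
We decompose $\mu$ in frequency and interpolate, in the spirit of Rubio de Francia's argument, but we now use \eqref{dim} to control the pieces in $L^1$. Let $\{\beta_j\}_{j\ge0}$ be a Littlewood--Paley partition of unity on $\R^d$, with $\beta_0$ supported near the origin and $\beta_j=\beta(2^{-j}\cdot)$ for $j\ge1$. Set $\mu^j=\mu\ast\check\beta_j$ and $M_j f=\sup_{t>0}|f\ast\mu^j_t|$. Then $\mm f\le\sum_j M_jf$. The term $M_0$ is dominated by the Hardy--Littlewood maximal function, since $\mu^0$ is a Schwartz function.

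For $j\ge1$ we aim at two estimates:
\begin{equation*}
\|M_jf\|_{L^2}\lesssim 2^{-j(a-\frac12)}\|f\|_{L^2},\qquad \|M_jf\|_{L^{1,\infty}}\lesssim 2^{j(d-b)}\|f\|_{L^1}.
\end{equation*}
The $L^2$ bound is exactly Rubio de Francia's. We use the standard Sobolev embedding in $t$: the sup over $t\in[1,2]$ is controlled by $\|F\|_{L^2_t}^{1/2}\|\partial_tF\|_{L^2_t}^{1/2}$. We combine this with $|\widehat{\mu^j}|\lesssim2^{-ja}$, the fact that $\partial_t$ costs a factor $2^j$, and a square-function argument over dyadic $t$ that uses the frequency localization.

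For the $L^1$ bound we exploit \eqref{dim}. The kernel satisfies $|\mu^j(x)|\lesssim 2^{jd}\mu(B(x,2^{-j}))\,(1+\dots)$ up to Schwartz tails, so
\begin{equation*}
|\mu^j(x)|\lesssim \sum_{k\ge0}2^{-kN}\,2^{jd}\,\mu\bigl(B(x,2^{k-j})\bigr).
\end{equation*}
This kernel is supported essentially in a fixed compact set with $\|\mu^j\|_\infty\lesssim 2^{j(d-b)}$. Consequently $M_j f\lesssim 2^{j(d-b)}M_{HL}f$, the sup over $t$ of an $L^\infty$ function on a compact set being controlled by $\|\cdot\|_\infty$ times the Hardy--Littlewood maximal function. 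This gives the weak $(1,1)$ bound $2^{j(d-b)}$ directly.

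This is the place where $b>d-1$ matters. Without \eqref{dim} one only has the bound $2^j$ from the $t$-derivative (Rubio de Francia's $L^1$ estimate for $M_j$ grows like $2^j$). When $b>d-1$ we have $d-b<1$, so the dimension bound is better, while for $b\le d-1$ it gives nothing new.

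We then interpolate the two bounds for each $j$ with weight $\theta=2/p'$, with the $2/p$ share on the $L^2$ side. We get decay $2^{-j\gamma(p)}$ with
\begin{equation*}
\gamma(p)=\Bigl(a-\tfrac12\Bigr)\Bigl(2-\tfrac2p\Bigr)-(d-b)\Bigl(\tfrac2p-1\Bigr).
\end{equation*}
This is positive exactly when $2a-1+2(d-b)>\frac{2}{p}(2a-1+d-b)$, that is, when $p>p_{(a,b)}$. Summing in $j$ then gives \eqref{maximal} for $p>p_{(a,b)}$.

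For the endpoint \eqref{rsweak} we apply Bourgain's summation trick (the interpolation lemma for sums of operators with geometrically growing and decaying bounds). We have $M_j$ satisfying restricted weak $(1,1)$ with norm $2^{j\epsilon_1}$, where $\epsilon_1=d-b>0$, and $L^2$ bounds $2^{-j\epsilon_0}$, where $\epsilon_0=a-\frac12>0$. The lemma then yields that $\sum_jM_j$ is bounded from $L^{p,1}$ to $L^{p,\infty}$ at the balancing exponent, which is exactly $p=p_{(a,b)}$. Presumably this is the same mechanism as in the proof of Theorem \ref{endp}, where $\epsilon_1=1$ gives $p_a$.

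\textbf{Main obstacle.} The delicate point is the $L^1$ bound. We must make sure that the Schwartz tails of $\check\beta_j$ do not destroy the $2^{j(d-b)}$ bound on $\|\mu^j\|_\infty$. Since $\mu$ is finite, the tail sum over $k$ is controlled by $\sum_k2^{-kN}2^{jd}\min(1,2^{(k-j)b})$, which is still $\lesssim2^{j(d-b)}$. We must also make sure that the domination by $M_{HL}$ is uniform in $t>0$, despite the non-compact support of $\mu^j$. This requires splitting $\mu^j$ into dyadic shells and using the decay $2^{-kN}$ in each shell.
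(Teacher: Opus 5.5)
Your argument is correct, but it is organized differently from the paper's. You keep Rubio de Francia's decomposition of the multiplier, $m_j=\widehat\mu\,\varphi_j$, as in Section~\ref{sec:endp}, so your $M_j$ is the operator $T_j^*$ there. The paper instead decomposes $f$ by $P_{k+j}$, adapted to the dyadic scale $t\sim 2^{-k}$. It proves the $L^2$ bound for the local operator $M_\mu^{loc}P_j$ in \eqref{mjL2-local} (via \eqref{ftc}, Plancherel and Lemma~\ref{decay-}), and then needs Lemma~\ref{loc-glob} to pass from local to global. With your decomposition the global $L^2$ bound is just Lemma~\ref{lem:L2-Tj}. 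Your $L^1$ side is the same Frostman computation as Lemma~\ref{conv} and \eqref{kast}: $|(\varphi_j^\vee\ast\mu)(x)|\lesssim 2^{j(d-b)}(1+|x|)^{-N}$, hence $T_j^*f\lesssim 2^{j(d-b)}M_{H\!L}f$. Your exponent $d-b$ is the right one; the $2^{j(d-a)}$ printed in \eqref{mjL1} is a misprint. In effect you rerun the proof of Theorem~\ref{endp}, replacing the $H^1\to L^1$ bound $2^j$ of Proposition~\ref{lem:H1-tj} by the weak $(1,1)$ bound $2^{j(d-b)}$. This is shorter for this theorem and makes the role of $b>d-1$ (i.e.\ $d-b<1$) transparent. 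The paper's route pays for the local-to-global step, but it produces the frequency-localized local estimates \eqref{mjL2-local} and \eqref{10}, which it reuses for the $L^p$-improving results in Section~\ref{sec4}.

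Three small repairs are needed.
\begin{enumerate}
\item Your formula for $\gamma(p)$ is correct, with weight $\frac2p-1$ on the $L^1$ bound and $\frac2{p'}$ on the $L^2$ bound. However, it equals $(2a-1+d-b)-\frac1p\bigl(2a-1+2(d-b)\bigr)$, so positivity reads $\frac1p<\frac{2a-1+d-b}{2a-1+2(d-b)}$, not the inequality you wrote. The conclusion $p>p_{(a,b)}$ still stands.
\item Lemma~\ref{B-Sum-Tri} as stated has strong-type hypotheses. You can either interpolate first to strong bounds at $1<p_1<p_{(a,b)}<p_0\le 2$, as the paper does, or run the level-set argument directly. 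For the latter, split the sum at $2^{J(d-b+2a-1)}\sim\lambda^{-1}$. For $j\le J$, distribute $\lambda$ geometrically to cope with the quasi-norm of $L^{1,\infty}$; for $j>J$, use Chebyshev in $L^2$. Both pieces are then $\lesssim\lambda^{-p_{(a,b)}}|E|$ for $f=\chi_E$.
\item For $b=d$ your $\epsilon_1=d-b$ vanishes. In that case, however, $\mu$ has bounded density and $M_\mu\lesssim M_{H\!L}$ directly.
\end{enumerate}
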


The range $p>p_{(a,b)}$ is sharp in general, as can be seen from specific examples showing that the estimate \eqref{maximal} fails when $p\le p_{(a,b)}$ (see Section~\ref{sharpness}).
Note that $p_{(a,b)}<p_a$ if and only if $b>d-1$.
Thus, Theorem~\ref{decay+dim} yields a wider range of boundedness than that of  Theorem~\ref{endp} when $b>d-1$.

\subsection{Local maximal function}   
Let $I = [1,2]$. 
We now consider the local maximal operators $M_\mu^{loc}$ defined  by 
\begin{equation*}
M_\mu^{loc} f(x) =\sup_{t\in I} |f\ast \mu_t(x)|. 
\end{equation*}
Due to scaling invariance,   the global maximal operator $M_\mu$ can be bounded only from $L^p$ to itself. However, 
it is known that  the local maximal operator $M_\mu^{loc}$ has $L^p$-improving property for some specific measures, for example, when $\mu=\sigma$.  That is to say, the estimate 
\begin{equation}\label{pqmax}
\| M_\mu^{loc} f   \|_{L^q(\R^d)}  \le C \|f\|_{L^p(\R^d)}
\end{equation}
holds for some $q >p$. In particular,     the $L^p$-improving property   for the local spherical  maximal function $M_{sph}^{loc}:=M_\sigma^{loc}$    is almost completely understood except some endpoint cases  
\cite{Schlag, SS, L} (see Section \ref{ext} for details).  

Likewise, one can expect that  $M_\mu^{loc}$ also has $L^p$-improving property for general measures satisfying \eqref{decay} and \eqref{dim}. Indeed, under these assumptions,  we show that   $M_\mu^{loc}$ is bounded from $L^{p_{(a,b)},1}$ to $L^{p'_{(a,b)},\infty}$, where  $p'$  denotes the H\"older conjugate exponent of $p$. (See Theorem \ref{thm:pq}).   Additionally,  assuming a kind of dispersive estimate for the convolution operator $f\to \mu_t\ast \overline \mu_s\ast f$, we also show that  the range of $p,q$ for which \eqref{pqmax} holds can be further extended (see Theorem \ref{pqr}). 
 
\subsubsection*{Organization}    In Section \ref{sec:endp}, we prove Theorem \ref{endp}. Section \ref{sec3} is devoted to the proof of Theorem \ref{decay+dim}, together with a discussion of its sharpness. Finally, in Section \ref{sec4}, we study the $L^p$-improving property of the local maximal operator $M_\mu^{\mathrm{loc}}$.
We also take this opportunity to correct an error in the statement of
\cite[Theorem~1.4]{L}, which concerns endpoint mapping properties of
$M_{sph}^{loc}$ for $d\ge3$; see Theorem~\ref{leelee} for details.

\section{Proof of Theorem \ref{endp}}\label{sec:endp} 
In this section we prove Theorem \ref{endp}. We begin by decomposing the multiplier $m(\xi)$ in the same manner  as in \cite{Ru86}.

Let $\beta\in C_c^\infty((1/2,2))$  such that 
 $\sum_{j=-\infty}^{\infty}\beta(2^{j}t)=1$ for $t>0$.  We set
\begin{align}\label{varphi0}  
\varphi_0(\xi) = \sum_{j\le 0} \beta(2^{-j}|\xi|),
\end{align}
and 
\begin{align}\label{varphij}  
  \varphi_j (\xi) = \beta(2^{-j}|\xi|), \quad j\ge 1.
\end{align}

\subsection{Decomposition of   $\mu$ and $L^2$ estimate}\label{sec:L2}  

Since $\sum_{j=0}^\infty \varphi_j(\xi)=1$ for all $\xi\in\mathbb{R}^d\setminus\{0\}$,  we
 may write
\[
\widehat \mu=\sum_{j=0}^\infty m_j,  
\]
where 
\[ m_j=\widehat \mu\,\varphi_j.\]

 Let $T_j^t $ denote the multiplier operator given by 
\[
\widehat{T_j^t  f}(\xi) = m_j(t\xi)\,\widehat f(\xi).
\]
Consequently, we have
\[
\mm f(x)\le \sum_{j=0}^\infty T_j^* f(x),
\]
where 
\begin{equation}
\label{eq:tjst}
T_j^* f(x) = \sup_{t>0} |T_j^t  f(x)|,
\end{equation}

Observe that $T_0^*$ is bounded on $L^p(\mathbb{R}^d)$ for all
$1<p\le \infty$. Indeed, the kernel
\[
K_0(x)=\mathcal{F}^{-1}(m_0)(x)
\]
is in  $\mathcal S(\mathbb R^d)$. Thus,  $T_0^* f$ is bounded above by a constant times the Hardy--Littlewood maximal function, which 
is  bounded on $L^p$ for $1<p\le \infty$. 
Thus,  in order to show the endpoint estimate \eqref{endpt}, it suffices to prove  
\Be \label{rweak0}     \Big\| \sum_{j\ge 1} T_j^\ast  f\Big\|_{L^{p_a, \infty}(\R^d)} \le C \|f\|_{L^{p_a,1}(\R^d)}.\Ee

For the purpose \eqref{rweak0},  we make use of the following, which is known as  Bourgain's summation trick \cite{B85} (see, also,  {\cite[Lemma 2.6]{L}}).

\begin{lemma}
\label{B-Sum-Tri} 
Let  $1\leq p_0,\, p_1\le \infty$,  and  $1\leq q_0,\, q_1\le\infty$. Suppose that $\{T_j\}_{j=-\infty}^\infty$ is a sequence of linear (or sublinear) operators such that
\begin{equation*}
\|T_j f\|_{L^{q_\ell}(\R^d)}\leq M_l 2^{j (-1)^\ell \varepsilon_\ell } \|f\|_{L^{p_\ell}(\R^d)},  \quad \ell=0, 1
\end{equation*}
for some  $\varepsilon_0$, $\varepsilon_1>0$. 
Then, we have 
\begin{equation*}
\Big\|\sum_jT_j f  \Big\|_{L^{q,\infty}(\R^d)}\leq CM_1^{\theta}M_2^{1-\theta} |f\|_{L^{p,1}(\R^d)},
\end{equation*}
where $\theta=\varepsilon_1/(\varepsilon_0+\varepsilon_1)$, $1/q=\theta/q_0+(1-\theta)/q_1$, and $1/p=\theta/p_0+(1-\theta)/p_1$.
\end{lemma}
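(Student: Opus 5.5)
This is Bourgain's summation trick, and I will prove it through the standard characterization of restricted weak type. For $q>1$, the bound $\|Sf\|_{L^{q,\infty}}\lesssim \|f\|_{L^{p,1}}$ for a sublinear $S$ follows once we show that for every measurable set $F$ of finite measure, every $|f|\le \chi_F$, and every $\lambda>0$,
\[
|\{x: |Sf(x)|>\lambda\}|\lesssim \big(M\,|F|^{1/p}\lambda^{-1}\big)^{q}.
\]
Here $M=M_0^{\theta}M_1^{1-\theta}$; the constants $M_l$ in the statement should be read with this indexing. The passage from characteristic-type functions to general $f\in L^{p,1}$ is standard (decomposition into level sets, or the atomic characterization of $L^{p,1}$). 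When $q_0$ or $q_1$ is not larger than $1$, one works with a power of the level set, or appeals directly to the version in \cite{L}. So the core task is the distributional inequality for $|f|\le\chi_F$.

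Fix such $f$ and $\lambda$, and split the sum at an integer $N$ to be chosen: $\sum_j T_jf=\sum_{j\le N}T_jf+\sum_{j>N}T_jf$. For $\ell=0$ the hypothesis gives the factor $2^{j\varepsilon_0}$, which is summable over $j\le N$. For $\ell=1$ it gives $2^{-j\varepsilon_1}$, summable over $j>N$. By Minkowski's inequality (or sublinearity) and geometric summation,
\[
\Big\|\sum_{j\le N}T_jf\Big\|_{L^{q_0}}\lesssim M_0 2^{N\varepsilon_0}|F|^{1/p_0},\qquad \Big\|\sum_{j>N}T_jf\Big\|_{L^{q_1}}\lesssim M_1 2^{-N\varepsilon_1}|F|^{1/p_1}.
\]
Chebyshev's inequality applied to each piece at height $\lambda/2$ then gives
\[
|\{|\textstyle\sum_j T_jf|>\lambda\}|\lesssim \big(M_0 2^{N\varepsilon_0}|F|^{1/p_0}\lambda^{-1}\big)^{q_0}+\big(M_1 2^{-N\varepsilon_1}|F|^{1/p_1}\lambda^{-1}\big)^{q_1}.
\]

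The remaining step is the choice of $N$, and this is the only place where care is needed. Take $N$ to be the integer nearest to the real number $\nu$ that makes the two expressions inside the parentheses equal to each other and to $A:=M|F|^{1/p}\lambda^{-1}$. This requires
\[
2^{\nu\varepsilon_0}=\frac{A\lambda}{M_0|F|^{1/p_0}},\qquad 2^{-\nu\varepsilon_1}=\frac{A\lambda}{M_1|F|^{1/p_1}}.
\]
Raising the first equation to the power $\theta'=\varepsilon_1/(\varepsilon_0+\varepsilon_1)$ and the second to $1-\theta'$, then multiplying, eliminates $\nu$. With $\theta'$ identified with the interpolation parameter in the statement, this is consistent exactly because $1/p=\theta/p_0+(1-\theta)/p_1$ and $M=M_0^\theta M_1^{1-\theta}$.

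With this $N$ the two terms become $A^{q_0}$ and $A^{q_1}$, up to constants coming from rounding $\nu$ to an integer. That is not yet $A^{q}$. The fix is to absorb the exponents by homogeneity: replace $T_j$ by $T_j/M$ and $f$ by $f/|F|^{1/p}$. A cleaner route is to apply the above argument to the rescaled family $T_j^{s}f=T_j(f)$ with a shifted index, $j\mapsto j+s$. This amounts to optimizing $\nu$ under the constraint that each term is at most $A^{q}$, and the relation $1/q=\theta/q_0+(1-\theta)/q_1$ is exactly what makes the two constraints compatible. I expect this exponent bookkeeping to be the main obstacle.
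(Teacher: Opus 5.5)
Your outline is the standard argument behind Bourgain's trick: reduce to $|f|\le\chi_F$, split $\sum_jT_jf$ at $N$, sum the two geometric series, and apply Chebyshev at height $\lambda/2$. The paper gives no proof of its own; it only cites \cite{B85} and \cite[Lemma 2.6]{L}.

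The gap is in the one step with real content, the choice of $N$. Put $a_\ell=M_\ell|F|^{1/p_\ell}\lambda^{-1}$, so that $A=a_0^{\theta}a_1^{1-\theta}$. Your $\nu$ (which exists, as you check) equalizes the bases, $a_02^{\nu\varepsilon_0}=a_12^{-\nu\varepsilon_1}=A$. This yields only $A^{q_0}+A^{q_1}$.

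When $q_0\neq q_1$, this is not $\lesssim A^q$: $A^{\min q_\ell}$ dominates as $A\to0$, and $A^{\max q_\ell}$ dominates as $A\to\infty$. This is the situation in every use of the lemma in the paper, e.g.\ $q_\ell=p_\ell$ with $p_0\neq p_1$ in Theorems \ref{endp} and \ref{decay+dim}.

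Neither of your repairs helps:
\begin{itemize}
\item Normalizing $M=|F|=1$ leaves $\lambda$ free, so you are still comparing $\lambda^{-q_0}+\lambda^{-q_1}$ with $\lambda^{-q}$. The $T_j$ carry no scaling structure that could trade powers of $\lambda$.
\item Shifting $j\mapsto j+s$ replaces $(M_0,M_1)$ by $(2^{s\varepsilon_0}M_0,2^{-s\varepsilon_1}M_1)$. This leaves $M_0^\theta M_1^{1-\theta}$ unchanged, and it is exactly the freedom already spent in choosing $N$.
\end{itemize}
Your choice also fails when some $q_\ell=\infty$, as with $(p_0,q_0)=(1,\infty)$ in the proof of Theorem \ref{pqr}. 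There it bounds the $L^\infty$ piece only by a constant times $A\lambda$, which is not below $\lambda/2$ once $A\gtrsim1$.

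What is needed is to balance the two terms after raising them to the powers $q_\ell$. Define $\nu$ by $a_02^{\nu\varepsilon_0}=A^{q/q_0}$. Then automatically
\[
a_12^{-\nu\varepsilon_1}=A^{q/q_1}.
\]
To see this, raise the two identities to the powers $\theta=\varepsilon_1/(\varepsilon_0+\varepsilon_1)$ and $1-\theta$ and multiply. Since $\varepsilon_0\theta=\varepsilon_1(1-\theta)$, the powers of $2^{\nu}$ cancel, and the claim reduces to $a_0^{\theta}a_1^{1-\theta}=A^{q(\theta/q_0+(1-\theta)/q_1)}=A$, which holds. This is where both interpolation relations, for $1/p$ and for $1/q$, are used.

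With $N$ an integer within a fixed distance of $\nu$, both Chebyshev terms are $\lesssim A^q$. If some $q_\ell=\infty$, the same formula with $q/q_\ell=0$ bounds the corresponding $L^\infty$ piece by a constant multiple of $\lambda$. Shifting $N$ by a fixed amount in the appropriate direction then brings it below $\lambda/2$, so that piece contributes nothing to the level set.

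Your closing sentence names the right constraint. However, the proposal neither carries out this choice nor verifies it, and it presents the incorrect balancing as the main argument.
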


Thanks to Lemma \ref{B-Sum-Tri},  the desired estimate follows once we have 
\begin{equation}  
\label{eq:tj} 
\| T_j^* f\|_{L^p(\R^d)}\le  C  2^{(2a+1)(\frac1p-\frac{2a}{2a+1})j} \|f\|_{L^p(\R^d)} , \quad  j\ge1
\end{equation}
for $1<p \le 2$.  Indeed, taking $p_0$ and $p_1$  such that $2> p_0> \frac{2a+1}{2a}> p_1>1$ with $\epsilon_j=(-1)^j(2a+1)(\frac{2a+1}{2a}-\frac1{p_j})$, $j=0, 1$, 
by Lemma \ref{B-Sum-Tri} we obtain \eqref{rweak0}. 

In order to show \eqref{eq:tj}, we first recall the next lemma that is already proved in Rubio de Francia's paper (\cite[Lemma 1]{Ru86}), which is 
an easy consequence of \eqref{decay} and Plancherel's theorem. 

\begin{lemma}\label{lem:L2-Tj}
For every $j\ge1$, we have
\[
\|T_j^* f\|_{L^2(\mathbb{R}^d)}
   \le C\,2^{j(\frac12-a)} \|f\|_{L^2(\mathbb{R}^d)}.
\]
\end{lemma}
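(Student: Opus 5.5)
The plan is to follow Rubio de Francia's square-function argument, which rests on the fundamental theorem of calculus in the dilation parameter. For $f\in\mathcal S(\R^d)$ set $F_j(x,t)=T_j^tf(x)$. Since $m_j$ is supported where $|\xi|\sim 2^j$, the function $t\mapsto F_j(x,t)$ is smooth and vanishes as $t\to 0$. Hence for each $x$,
\[
|F_j(x,t)|^2 = 2\,\mathrm{Re}\int_0^t \overline{F_j(x,s)}\,\partial_s F_j(x,s)\,ds ,
\]
and by Cauchy--Schwarz in $ds/s$,
\[
\sup_{t>0}|F_j(x,t)|^2\le 2\Big(\int_0^\infty |F_j(x,s)|^2\frac{ds}{s}\Big)^{1/2}\Big(\int_0^\infty |s\,\partial_s F_j(x,s)|^2\frac{ds}{s}\Big)^{1/2}.
\]
Integrating in $x$ and applying Cauchy--Schwarz once more gives
\[
\|T_j^*f\|_2^2\le 2\,\|G_jf\|_2\,\|\tilde G_jf\|_2 ,
\]
where $G_jf$ and $\tilde G_jf$ are the square functions built from the multipliers $m_j(s\xi)$ and $s\partial_s[m_j(s\xi)]=(\xi\cdot\nabla m_j)(s\xi)$, respectively.

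By Plancherel and Fubini,
\[
\|G_jf\|_2^2=\int|\widehat f(\xi)|^2\int_0^\infty |m_j(s\xi)|^2\frac{ds}{s}\,d\xi .
\]
The substitution $r=s|\xi|$ turns the inner integral into $\int_0^\infty |m_j(r\xi/|\xi|)|^2\,dr/r$. This integral runs over $r\sim 2^j$, and there $|m_j|\lesssim 2^{-ja}$ by \eqref{decay}. Hence $\|G_jf\|_2\lesssim 2^{-ja}\|f\|_2$.

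For $\tilde G_j$ we need $|\xi\cdot\nabla m_j(\xi)|\lesssim 2^{j}2^{-ja}$ on $|\xi|\sim 2^j$. The gradient falling on $\varphi_j$ contributes $|\xi|\,2^{-j}|\widehat\mu|\lesssim 2^{-ja}$, which is fine. The gradient falling on $\widehat\mu$ needs more care. Since $\mu$ has compact support, $\nabla\widehat\mu$ is the Fourier transform of the compactly supported vector measure $-2\pi i y\,d\mu(y)$. So $\xi\cdot\nabla\widehat\mu(\xi)=\widehat{\nu}(\xi)$ with $d\nu=-2\pi i\, y\cdot\xi\, d\mu$, and naively we only know $|\nabla\widehat\mu|\lesssim 1$.

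This is the main obstacle: the decay \eqref{decay} is not assumed for the derivative. I would handle it as Rubio de Francia does. Write $y\,\mu=y\chi(y)\mu$ with $\chi\in C_c^\infty$ equal to $1$ on $\supp\mu$. Then $\widehat{y\mu}=\widehat{y\chi}\ast\widehat\mu$ is a Schwartz function convolved with a function obeying \eqref{decay}, and therefore $|\nabla\widehat\mu(\xi)|\lesssim(1+|\xi|)^{-a}$. This gives $|\xi\cdot\nabla m_j|\lesssim 2^{j(1-a)}$ on the support, and hence $\|\tilde G_jf\|_2\lesssim 2^{j(1-a)}\|f\|_2$ by the same Plancherel computation.

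Combining the two bounds,
\[
\|T_j^*f\|_2^2\lesssim 2^{-ja}\,2^{j(1-a)}\|f\|_2^2=2^{j(1-2a)}\|f\|_2^2 ,
\]
that is, $\|T_j^*f\|_2\lesssim 2^{j(1/2-a)}\|f\|_2$. The estimate passes from Schwartz functions to all of $L^2$ by density, using the monotone convergence theorem for the supremum.
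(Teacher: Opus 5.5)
Your proof is correct and is essentially the argument of Rubio de Francia's Lemma~1 in \cite{Ru86}, which the paper simply cites for this estimate. That argument combines the fundamental theorem of calculus in $t$ with Cauchy--Schwarz against $ds/s$, Plancherel on the two resulting square functions, and the fact that $\nabla\widehat\mu$ inherits the decay \eqref{decay} because $\mu$ has compact support; the paper runs the same scheme itself when proving \eqref{mjL2-local}, using the localized inequality \eqref{ftc} on $I=[1,2]$ and Lemma~\ref{decay-} in place of your convolution $\widehat{y\chi}\ast\widehat\mu$.
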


Thus,  by interpolation, the proof of \eqref{eq:tj} is reduced to  showing the following estimate from the standard Hardy space $H^1(\R^d)$ to $L^1(\R^d)$. 

\begin{proposition}\label{lem:H1-tj}
For every $j\ge1$, we have 
\begin{equation}\label{eq:H1-goal}
    \|T_j^* f\|_{L^1(\mathbb{R}^d)} \le C\,2^j \|f\|_{H^1(\mathbb R^d)}. 
\end{equation}
\end{proposition}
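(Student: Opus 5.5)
We prove \eqref{eq:H1-goal} by the atomic decomposition of $H^1(\mathbb R^d)$. Since $T_j^*$ is sublinear, it suffices to show $\|T_j^* \mathfrak a\|_{L^1}\le C2^j$ uniformly over $H^1$-atoms $\mathfrak a$. Each atom is supported in a ball $B=B(x_0,r)$, has $\|\mathfrak a\|_\infty\le |B|^{-1}$ and $\int \mathfrak a=0$. By translation invariance we take $x_0=0$. We write $T_j^t f=f\ast K_j^t$ with $K_j^t(x)=t^{-d}K_j(x/t)$ and $K_j=\mathcal F^{-1}(m_j)=\mu\ast \check\varphi_j$. The basic facts about $K_j$ come only from $\mu$ being a finite compactly supported measure, say $\supp\mu\subset B(0,R)$, and from $\check\varphi_j(x)=2^{jd}\check\beta(2^jx)$ (up to the radial normalization). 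These give
\[
|K_j(x)|\le C_N 2^{jd}(1+2^j\operatorname{dist}(x,\supp\mu))^{-N},\qquad |\nabla K_j(x)|\le C_N 2^{j(d+1)}(1+2^j\operatorname{dist}(x,\supp\mu))^{-N}.
\]
In particular $\|K_j\|_{L^1}\le C$ and $\|\nabla K_j\|_{L^1}\le C2^j$, and $K_j$ decays rapidly off $B(0,2R)$. The decay hypothesis \eqref{decay} is not needed here; it enters only through Lemma~\ref{lem:L2-Tj}.

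We split $\mathbb R^d$ into $B^*=B(0,Cr)$ and its complement. On $B^*$ we use Cauchy--Schwarz together with the $L^2$ bound $\|T_j^*\|_{2\to2}\le C2^{j/2}$, which follows from Lemma~\ref{lem:L2-Tj} since $a>1/2$ gives $2^{j(1/2-a)}\le 1$. This yields
\[
\|T_j^*\mathfrak a\|_{L^1(B^*)}\le |B^*|^{1/2}\|T_j^*\mathfrak a\|_2\lesssim |B|^{1/2}|B|^{-1/2}=O(1).
\]
Alternatively, $T_j^*f\lesssim Mf$ uniformly in $j$ gives an $L^2$ bound $O(1)$. Off $B^*$ we treat the scales $t$ in three regimes relative to $r$ and $j$.

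\emph{Large scales, $t\ge r$.} We use the cancellation of the atom:
\[
|\mathfrak a\ast K_j^t(x)|\le \int|\mathfrak a(y)|\,|K_j^t(x-y)-K_j^t(x)|\,dy\le r\sup_{|y|\le r}t^{-d-1}|\nabla K_j|((x-y)/t).
\]
Taking the supremum over $t\ge r$, this is dominated by $\sum_{k\ge0} (r/2^kr)\,(2^kr)^{-d}\,2^{j(d+1)}\,\Phi_j((x-y)/2^kr)$, where $\Phi_j$ is a bump adapted to the $2^{-j}$-neighborhood of $\supp\mu$. The $L^1$ norm of the $k$-th term is $\lesssim 2^{-k}\|\nabla K_j\|_1$-type, i.e. $\lesssim 2^{-k}2^j$. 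To control the supremum over $t\in[2^kr,2^{k+1}r]$ we use the bound $\sup_{t\sim s}|\nabla K_j^t|\lesssim s^{-d-1}2^{j(d+1)}\tilde\Phi_j(\cdot/s)$, where $\tilde\Phi_j$ is a slightly enlarged neighborhood function whose integral is still $\lesssim 2^{-jd}\cdot 2^j$ (the thickening in the radial direction costs at most a factor $2^j$ in measure). I expect this to be the main obstacle. A cleaner route is to note $\sup_t \le |\,\cdot\,|_{t=s}+\int_s^{2s}|\partial_t \cdot|\,dt$, with $\|\partial_tK_j^t\|_{L^1}\lesssim 2^j/t$. This gives the sum $\sum_k 2^{-k}2^j\lesssim 2^j$.

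\emph{Small scales, $t\le r$.} The kernel $K_j^t$ is essentially supported in $B(0,2Rt)$ up to Schwartz tails in $2^j|x|/t$. Hence for $x\notin B^*$ we have $|x|\ge Cr\gg Rt$, and the tail bound gives $\sup_{t\le r}|\mathfrak a\ast K^t_j(x)|\lesssim \|\mathfrak a\|_1\sup_{t\le r}t^{-d}2^{jd}(1+2^j|x|/t)^{-N}\lesssim r^{N-d}2^{j(d-N)}|x|^{-N}$. This is integrable over $|x|\ge Cr$ with total $\lesssim 2^{j(d-N)}\le C$. Combining the three pieces gives $\|T_j^*\mathfrak a\|_1\lesssim 2^j$, hence \eqref{eq:H1-goal}.
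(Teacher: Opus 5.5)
There is a genuine gap in your large-scale regime $t\ge r$, and it sits exactly where the logarithm in Rubio de Francia's bound $\|T_j^*f\|_{L^1}\lesssim j2^j\|f\|_{H^1}$ comes from. Your bound $\lesssim 2^{-k}2^j$ for the $k$-th block is the bound at a \emph{single} $t\sim 2^kr$, namely $r\|\nabla K_j^t\|_{L^1}\approx 2^jr/t$. Taking the supremum over $t\in[2^kr,2^{k+1}r]$ costs an extra factor $\approx 2^j$. The reason is that $|\nabla K_j^t|$ is concentrated in $2^{-j}t$-neighborhoods of the points $tu$, $u\in\supp\mu$, and as $t$ runs through the block these points move along radial segments of length $\approx t$, i.e.\ $2^j$ times the resolution scale. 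Your parenthetical remark records exactly this, but you did not carry the factor through. With $\int\tilde\Phi_j\lesssim 2^{-jd}2^j$, the $k$-th term has $L^1$ norm $\lesssim r(2^kr)^{-1}2^{2j}=2^{2j-k}$, and $\sum_{k\ge0}2^{2j-k}\approx 2^{2j}$.

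The fundamental-theorem-of-calculus route does not rescue this. Since $\|\partial_tK_j^t\|_{L^1}\approx 2^j/t$, the term $\int_s^{2s}\|\mathfrak a\ast\partial_tK_j^t\|_{L^1}\,dt$ is $\lesssim 2^j$ if you use only $\|\mathfrak a\|_{L^1}\le 1$, with no decay in $k$. If you use cancellation it is $\lesssim r\int_s^{2s}\|\nabla\partial_tK_j^t\|_{L^1}\,dt\approx 2^{2j-k}$. So the best per-block bound your tools give is $\min(2^j,2^{2j-k})$, which sums over $k\ge0$ to $\approx j2^j$. That is precisely the logarithmic loss the proposition is meant to remove. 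In addition, replacing the $y$-average by $\sup_{|y|\le r}$ is lossy when $2^{-j}t<r$, since the kernel then varies on a scale finer than the atom.

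You announce three regimes relative to $r$ and $j$ but treat only two, and the missing one is the point of the proof. Cancellation pays only once the resolution scale $2^{-j}t$ of $K_j^t$ exceeds the atom's radius $r$, i.e.\ for $t\ge 2^jr$, not $t\ge r$.

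For $0<t\le 2^jr$ the paper uses size alone: $|K_j^t\ast\mathfrak a(x)|\lesssim\int(\mathcal E^N_{2^{-j}t}\ast|\mathfrak a|)(x-tu)\,d|\mu|(u)$, with $\sup_{t\le 2^jr}\mathcal E^N_{2^{-j}t}\ast|\mathfrak a|\lesssim\mathcal E^N_r$. Covering the segment $\{tu:0<t\le 2^jr\}$ by $\approx 2^j$ balls of radius $r$ and applying Lemma~\ref{l-constant} gives $\int\sup_{0<t\le 2^jr}\mathcal E^N_r(x-tu)\,dx\lesssim 2^j$ for this whole range at once. Equivalently, the blocks $t\sim 2^kr$ with $k<j$ cost $\approx 2^k$ each, not $2^j$ each.

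For $t\in[2^k2^jr,2^{k+1}2^jr]$, cancellation gains $2^jr/t\approx 2^{-k}$, and the supremum over the block costs the same factor $2^j$ you identified. This gives $2^{j-k}$, which is summable. Moving your threshold from $r$ to $2^jr$ and extending your size argument to $t\le 2^jr$ repairs the proof.

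Two smaller points. First, $\|K_j\|_{L^1}\lesssim 1$ and $\|\nabla K_j\|_{L^1}\lesssim 2^j$ are true by Young's inequality, but they do not follow from your pointwise bounds in terms of $\operatorname{dist}(x,\supp\mu)$. For general $\mu$ those bounds lose a factor $2^{jd}$ times the volume of the $2^{-j}$-neighborhood of $\supp\mu$, so keep the $\mu$-integral outside as the paper does. Second, the aside $T_j^*f\lesssim Mf$ uniformly in $j$ is false: for $\mu=\sigma$ it fails by a factor $2^j$. You do not rely on it, though.
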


The estimate \eqref{eq:H1-goal} improves upon the bound 
\[ \|T_j^* f\|_{L^1(\mathbb{R}^d)} \le  C j 2^{j} \|f\|_{H^1(\mathbb{R}^d)}\] 
 obtained in \cite{Ru86} by eliminating the logarithmic loss.
This  bound was  proved via the Calder\'on--Zygmund theory,
more precisely, by controlling $\|T_j^*\|_{H^1(\mathbb{R}^d) \to L^1(\mathbb{R}^d)}$ mainly  through a generic
condition on the modulus of continuity.

The rest of this section is devoted to the proof of Proposition  \ref{lem:H1-tj}.

\subsection{Estimate with a $H^1(\mathbb{R}^d)$ atom}
In order to prove \eqref{eq:H1-goal},  by the standard Hardy space theory,  it is enough to show that 
\begin{equation}\label{eq:atom-goal}
    \|T_j^* \fa\|_{L^1(\mathbb{R}^d)} \le C\,2^j
\end{equation}
 for any $H^1$-atom $\fa$ with a positive constant $C$,  independent of $j$ and $\fa$. 

Recall that   an $H^1(\mathbb{R}^d)$ atom $\fa$ satisfies the following: 
\begin{align*}
\tag{A1} 
\mathrm{supp}(\fa)&\subset B, 
\label{a1}
\\ 
\tag{A2} 
 \|\fa\|_{L^\infty(\mathbb{R}^d)}&\le |B|^{-1}, 
\label{a2}
\\ 
\tag{A3}  
\int_{\R^n} \fa(x)\,dx &= 0, 
\label{a3}
\end{align*}
where $B$ is a ball in $\mathbb R^d$. 
From \eqref{a1} and \eqref{a2} we also have the basic size bound
\begin{equation}\label{eq:atom-L1}
\|\fa\|_{L^1(\mathbb{R}^d)}\le 1.
\end{equation}
Indeed, $\|\fa\|_{L^1(\mathbb{R}^d)}\le |B|\,\|\fa\|_{L^\infty(\mathbb{R}^d)} \le |B|\,|B|^{-1}=1$.

\subsection{Preliminary decomposition}
Since $\mu$ is compactly supported and the condition \eqref{decay} is invariant under translation and dilation (up to a change in the constant $C$), we may assume without loss of generality that
\begin{equation} \label{musupp} \supp \mu \subset B(2^{-1}e_1, 2^{-2}) \subset B(0,1). \end{equation}
Moreover, since $T_j^*$ is translation invariant,  to show the estimate \eqref{eq:atom-goal}  we may translate coordinates and assume that 
the ball $B$ is centered at the origin; thus 
\Be\label{asupp} \mathrm{supp}\,\fa\subset B=B(0,r).\Ee

To prove \eqref{eq:atom-goal}, we break 
\begin{align*}
    \|T_j^* \fa\|_{L^1(\mathbb{R}^d)}     &= I_{near} + I_{far},
\end{align*}
where 
\begin{align}
\label{eq:neqr}
I_{\text{near}} &:=\int_{ |x|\leq4r } T_j^* \fa(x)dx,
\\
\label{eq:far} 
I_{\text{far}}&:= \int_{ |x|>4r } T_j^* \fa(x)dx .
\end{align}

\subsection{Estimate for $I_{\mathrm{near}}$} We first estimate the maximal operator near the support of the atom, 
specifically over the region $\{|x| \le 4r\}$. Recalling \eqref{eq:neqr}, we use the Cauchy--Schwarz inequality and   Lemma~\ref{lem:L2-Tj}  to obtain 
\begin{align*}
I_{\mathrm{near}}
\le \bigl|B(0,4r)\bigr|^{1/2}\,\|T_j^* \fa\|_{L^2(\R^d)}
\lesssim  r^{d/2}\,2^{j(1/2-a)}\|\fa\|_{L^2(\mathbb{R}^d)}. 
 \end{align*}
Since $\|\fa\|_{L^2(\mathbb{R}^d)}\le \|\fa\|_{L^\infty(\mathbb{R}^d)} |B(0,r)|^{1/2}\lesssim r^{-d/2}$, 
\begin{equation}\label{eq:near-final}
I_{\mathrm{near}}
\lesssim  2^{j(1/2-a)}.
\end{equation}
In particular, since $a>1/2$, the right-hand side is $\lesssim 1$ uniformly in $j$.

\subsection{Estimate for $I_{\mathrm{far}}$} Estimate for $I_{\mathrm{far}}$ is more involved.
To this end, we make use of a locally constant property of the associated kernels.

We begin by noting that
\[
\widehat{T_j^t f}(\xi)= \widehat \mu(t\xi)\,\varphi_j(t\xi)\, \widehat f(\xi).
\]
Thus, we have
\[
T_j^t f(x) = K_j^t *f(x),
\]
where
\begin{equation}\label{eq:Kjt-explicit}
\begin{aligned}
K_j^t (x)
&= 2^{dj} t^{-d} \int_{\R^d} {\beta}^\vee \left(2^j\Big(\frac{x}{t} - u\Big)\right) \, d\mu(u) .
\end{aligned}
\end{equation}
This  will be often used  throughout  this section  to obtain bounds on the kernel $K_j^t $.

Recalling \eqref{eq:tjst} and \eqref{eq:far}, 
we split 
\[
I_{\mathrm{far}}
\le I_{\mathrm{far}}^{(1)} + I_{\mathrm{far}}^{(2)},
\]
where
\begin{align}
I_{\mathrm{far}}^{(1)}&:=\int_{|x|>4r}\sup_{ 0<t\leq 2^jr}\abs{(K_j^t *\fa)(x)}\,dx,
\label{eq:f2}
\\
I_{\mathrm{far}}^{(2)}&:=\int_{|x|>4r}\sup_{t> 2^jr}\abs{(K_j^t *\fa)(x)}\,dx.
\label{eq:f3}
\end{align}

\subsection{Estimate for $I_{\mathrm{far}}^{(1)}$}\label{subsec:far-intermediate} 
We first handle  $I_{\mathrm{far}}^{(1)}$, for which we obtain the estimate 
\Be\label{ff2}
I_{\mathrm{far}}^{(1)} \le \ C 2^j\,\|\mu\|. 
\Ee 
Here $\|\mu\|$ denotes the total mass of  $\mu$.

Recalling \eqref{eq:Kjt-explicit}, from rapid decay of ${\beta}^\vee$ we note
\[
|K_j^t (x)|\lesssim \int_{\R^d} \etk_\ts (x-tu)\,d|\mu|(u)
\]
for any $N$, where
\Be
\label{ekt}
 \etk_\rho(x):= \rho^{-d} ( 1+ \rho^{-1}|x|)^{-N}.
\Ee

 Changing the order of the integrations gives
\begin{equation}\label{eq:mid-rewrite}
| K_j^t *\fa(x)| 
\lesssim \int_{\R^d}\bigl( \etk_\ts*|\fa|\bigr)(x-tu)\,d|\mu|(u),
\end{equation}
where $|\mu|$ denotes the total variation of $\mu$. 
Thus, we have 
\begin{equation}\label{eq:mid-sup-inside}
\sup_{0<t\le 2^j r}|(K_j^t *\fa)(x)|
\le \int_{\R^d}\sup_{0<t\le 2^j r}\big( \etk_\ts*|\fa|\bigr)(x-tu)\,d|\mu|(u).
\end{equation}

\bigskip

Fix a large  integer $N>2d+1$. We first claim that
\begin{equation}\label{eq:gt-majorized}
\sup_{0<t\le 2^j r} \etk_\ts *|\fa| (x)  \lesssim   \etk_r (x), 
\end{equation}
 where the implicit constant depends only on $N,d$ but is independent of $r$ and $x$.  

\begin{proof}[Proof of  \eqref{eq:gt-majorized}.] 
Note $\|\fa\|_{L^\infty(\mathbb{R}^d)}\lesssim r^{-d}$. Since  $
\big( \etk_\ts*|\fa|\bigr)(x)  
\lesssim r^{-d}$,   to show \eqref{eq:gt-majorized} we may assume that 
\[  |x|\ge 2r.\]
Consequently, we need only  to show that 
\Be 
\label{easy}
\bigl( \etk_\ts*|\fa|\bigr)(x)   \lesssim r^{-d} \left( \frac{|x|}{ r}\right)^{-N}. 
\Ee

Since $0< \ts\le r$ and $N>d$, it follows that 
\begin{align*}
\etk_\ts (x-y) \le {(\ts)}^{-d}\Bigl(\frac{|x-y|}{\ts}\Bigr)^{-N}
\le r^{\,N-d}|x-y|^{-N}.
\end{align*}
The fact that $\supp \fa\subset B(0,r)$ and the above inequality yield 
\begin{align*}\label{eq:gt-majorized case2}
\big( \etk_\ts*|\fa|\bigr)(x)   &\le  \|\fa\|_{L^\infty(\mathbb{R}^d)} \int_{|y|\le r}   \etk_\ts (x-y) \,dy\\
&\lesssim r^{-d}\int_{|y|\le r} r^{\,N-d}|x-y|^{-N}\,dy.
\end{align*}
Recalling $|x|\ge 2r$, we note that $|x-y|\ge |x|/2$ for $|y|\le r$. Thus,  
\begin{align*}
\big( \etk_\ts*|\fa|\bigr)(x)   &\lesssim r^{-d}\,r^{\,N-d}\,|x|^{-N}\,|B(0,r)|, 
\end{align*}
which  gives  \eqref{easy}. 
 \end{proof}

Combining 
\eqref{eq:mid-sup-inside}  and \eqref{eq:gt-majorized} gives
\begin{align*}
I_{\mathrm{far}}^{(1)} 
&\lesssim \int_{\R^d}\int_{|x|>4r}\sup_{0<t\le 2^j r} \etk_r  (x-tu)\,dx\,d|\mu|(u).
\end{align*}
Thus, recalling  \eqref{musupp}, for \eqref{ff2} we need only  to show 
\[ 
\int_{|x|>4r}\sup_{0<t\le 2^j r} \etk_r  (x-tu)\,dx 
\lesssim 2^j
\]
for $1/4 \le |u| \le 3/4$,  equivalently, 
\begin{equation}\label{eq:Fu-L1}
\int_{|x|>4r}\sup_{0<t\le 2^j r|u|} \etk_r  (x-t\omega)\,dx 
\lesssim 2^j,  \quad   \omega:=\frac{u}{|u|}. 
\end{equation}

Consequently, we are led to consider  domination of the  supremum of $\etk_r (x-\cdot)  $ along the line segment 
\[ L_\omega:=\{t\omega:\ 0<t\le 2^j r |u|\}.\]
Setting  $t_k=kr,$ $k=0, \dots,    M= \lceil 2^j r \rceil $, we have  
\[  L_\omega  \subset   \bigcup_{k=0}^{M-1}   L_k :=\bigcup_{k=0}^{M-1} [t_k \omega, t_{k+1} \omega].\]
  Note that 
\begin{align*}
\sup_{0<t\le 2^j r|u|} \etk_r  (x-t\omega) = \sup_{h\in L_\omega} \etk_r  (x-h) 
&\le \sum_{k=0, \dots, M-1} \sup_{h\in L_k} \etk_r  (x-h) .
\end{align*}
Therefore,  we have
\Be 
\label{tem0}
\sup_{0<t\le 2^j r|u|} \etk_r  (x-t\omega) \le  \sum_{k=0, \dots, M-1}  \Big( \sup_{|z|\le r}\etk_r  (x-t_k\omega-z)  \Big).
\Ee

We now make use of the following elementary lemma which exploits  locally constant  property of $\etk_r$ at scale $r$. 

\begin{lemma}\label{l-constant} Let $\rho>0$. There exists a constant $C=C(N)$ such that
\begin{equation}\label{eq:slow-varying} 
C^{-1} \etk_\rho(y) \le \sup_{|h|\le \rho} \etk_\rho(y-h) \le C \etk_\rho(y)  
\end{equation}
for all $y\in\R^d.$
\end{lemma}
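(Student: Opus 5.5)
The plan is to reduce everything to a single elementary comparison, namely
\[
1+\rho^{-1}|y-h| \simeq 1+\rho^{-1}|y| \qquad\text{whenever } |h|\le \rho .
\]
Since $\etk_\rho(y)=\rho^{-d}(1+\rho^{-1}|y|)^{-N}$ depends on $y$ only through the factor $1+\rho^{-1}|y|$, raising this comparison to the power $-N$ gives both inequalities in \eqref{eq:slow-varying}, with $C=2^N$.

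For the upper bound, fix $|h|\le\rho$. The triangle inequality gives $|y|\le |y-h|+\rho$, so
\[
1+\rho^{-1}|y| \le 2+\rho^{-1}|y-h| \le 2\bigl(1+\rho^{-1}|y-h|\bigr).
\]
Consequently
\[
(1+\rho^{-1}|y-h|)^{-N}\le 2^N(1+\rho^{-1}|y|)^{-N},
\]
that is, $\etk_\rho(y-h)\le 2^N\etk_\rho(y)$. Taking the supremum over $|h|\le\rho$ yields the right-hand inequality.

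The lower bound is simpler still: choosing $h=0$ in the supremum gives $\sup_{|h|\le\rho}\etk_\rho(y-h)\ge \etk_\rho(y)$, so any $C\ge1$ works on that side. (Symmetrically, $1+\rho^{-1}|y-h|\le 2(1+\rho^{-1}|y|)$ would also give it, but this is not needed.)

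There is no real obstacle here. The only point to keep in view is that the constant must be independent of $\rho$ and $y$. This holds automatically because both sides scale in the same way: the factor $\rho^{-d}$ is common to both and cancels, and the comparison above involves only the dimensionless quantity $\rho^{-1}|y|$. The resulting constant is $C=2^N$.
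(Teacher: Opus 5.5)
Your proof is correct and follows the paper's argument: the paper also shows that $\rho+|y-h|$ and $\rho+|y|$ agree up to a factor of $2$ when $|h|\le\rho$, and raises this to the power $-N$ to get $C=2^N$. The only difference is that you obtain the lower bound by taking $h=0$, whereas the paper uses the two-sided comparison; the two-sided version is slightly stronger but is not needed for the lemma as stated.
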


\begin{proof}
When $|h|\le \rho$, note  that 
\[
2^{-1} (
 \rho+ {|y|})\le  \rho+ {|y-h|}   \le 2 (
 \rho+ {|y|})
\] 
for all $y\in \mathbb R^n$.  Thus, it follows that 
\[
2^{-N}  \Bigl(1+\frac{|y|}{\rho}\Bigr)^{-N} \le \Bigl(1+\frac{|y-h|}{\rho}\Bigr)^{-N}\le 2^N  \Bigl(1+\frac{|y|}{\rho}\Bigr)^{-N}.
\]
Then, recalling \eqref{ekt}, we get the desired inequality \eqref{eq:slow-varying}. 
\end{proof}

Applying Lemma  \ref{l-constant} to the right hand side of \eqref{tem0}, we obtain 
\[
\sup_{0<t\le 2^j r|u|} \etk_r  (x-t\omega)\le  C \sum_{k=0}^{M-1} \etk_r  (x-t_k \omega).
\]
Integrating over the region $|x|\ge 4r$ yields 
\begin{align*}
\int_{|x|>4r}\sup_{0<t\le 2^j r|u|} \etk_r  (x-t\omega)\,dx
&\le C \sum_{k=0}^{M-1}  \int_{\mathbb R^d}\etk_r  (x-t_k \omega)\,dx.
\end{align*}
Therefore, we have 
\begin{align*}
\int_{|x|>4r}\sup_{0<t\le 2^j r|u|} \etk_r  (x-t\omega)\,dx &\le CM\,\|\etk_r  \|_{L^1(\R^d)}. 
\end{align*}
Note that $M \approx \frac{2^jr|u|}{r} =2^j|u|$ and $\|\etk_r  \|_{L^1(\R^d)} \le C$ with a constant  $C$ independent of $r$. Therefore,  we obtain \eqref{eq:Fu-L1} since $|u|\le 1$ and $j\ge 1$.
This completes the proof \eqref{ff2}.

\subsection{Estimate for $I_{\mathrm{far}}^{(2)}$}
We now consider  $I_{\mathrm{far}}^{(2)}$ and prove 
\Be\label{ff3}
I_{\mathrm{far}}^{(2)} \le \ C2^j\,\|\mu\|,
\Ee 
which,  combined  with \eqref{ff2}, gives  $I
_{\mathrm{far}} \le \ C2^j\,\|\mu\|$.  Consequently, it proves \eqref{eq:atom-goal}. 
Thus, it only remains to show \eqref{ff3} to complete the proof.

 Using the cancellation condition \eqref{a3} of the atom $\fa$:
$\int \fa(y)\,dy=0,$
we have
\begin{align}
(K_j^t *\fa)(x) = \int_{|y|\le r}\bigl(K_j^t (x-y)-K_j^t (x)\bigr)\fa(y)\,dy.
\label{eq:cancellation-step}
\end{align}
The fundamental theorem of calculus in the direction $y$ gives
\[ 
K_j^t (x-y)-K_j^t (x)
= \int_0^1 y\cdot (\nabla K_j^t )(x-sy)\,ds.
\] 
Inserting this into \eqref{eq:cancellation-step} and then  taking absolute values  yield 
\[ 
\sup_{t>2^jr}\abs{(K_j^t *\fa)(x)}
\le \int_{|y|\le r} |y|\,|\fa(y)|\int_0^1 \sup_{t>2^jr}\abs{\nabla K_j^t (x-sy)}\,ds\,dy.
\]

Consequently, recalling \eqref{eq:f3} and  integrating over $\{|x|>4r\}$, we have
\begin{align}
I_{\mathrm{far}}^{(2)}
&= \int_{|y|\le r} |y|\,|\fa(y)|\int_0^1 \int_{|x|>4r}\sup_{t>2^jr}\abs{\nabla K_j^t (x-sy)}\,dx\,ds\,dy.
\label{eq:Tonelli}
\end{align}

Since  $|x|>4r$, $|y|\le r$, and $s\in[0,1]$, we have 
$|x-sy|\ge 3r. $  Thus, the change of variables $z=x-sy$ maps $\{|x|>4r\}$ into $\{|z|>3r\}$. Hence, 
\[
\int_{|x|>4r}\sup_{t>2^jr}\abs{\nabla K_j^t (x-sy)}\,dx
\le \int_{|z|>3r}\sup_{t>2^jr}\abs{\nabla K_j^t (z)}\,dz.
\]
Combining this and  \eqref{eq:Tonelli}, we obtain 
\begin{equation*}
I_{\mathrm{far}}^{(2)}
\le \left(\int_{|y|\le r}|y|\,|\fa(y)|\,dy\right)\,
\left(\int_{|z|>3r}\sup_{t>2^jr}\abs{\nabla K_j^t (z)}\,dz\right).
\end{equation*}
Using \eqref{eq:atom-L1}, we have  $
\int_{|y|\le r}|y|\,|\fa(y)|\,dy \le r.$
Therefore, the desired estimate \eqref{ff3} follows if we show 
\begin{equation}\label{eq:reduce-to-J}
J_r:=\int_{|x|>3r}\sup_{t>2^jr}\abs{\nabla K_j^t (x)}\,dz \le  C\frac{2^j}{r} \|\mu\|.
\end{equation}

For $\rho>0$,  set 
\Be\label{etil}   \tilde {\mathcal E}^N_\rho  = \rho^{-1}  \etk_\rho .\Ee
Recalling \eqref{eq:Kjt-explicit}, we note that 
\[| \nabla K_j^t (x)|\lesssim  \int_{\R^d} \tilde {\mathcal E}^N_\ts (x-tu)\,d|\mu|(u).  \]
Thus,  from  \eqref{eq:reduce-to-J}   we have
\[
J_r 
\le 
 \int_{\R^d} J_r(u)  d|\mu|(u),
\] 
where 
\[  J_r(u)= \int_{\R^d}  \sup_{2^j r  \le t}  \tilde {\mathcal E}^N_\ts (x-tu)\, dz.\]
Therefore,   the estimate  \eqref{eq:reduce-to-J} follows if we prove 
\Be 
\label{ju}
J_r(u) \le  C\frac{2^j}{r}, \quad j\ge 1. 
\Ee

In order to exploit the  decay in $t$, we make dyadic decomposition of the set  $\{t: 2^j r  \le t\}$  such that  
\[   \{t: 2^j r  \le t\}=\bigcup_{k=0}^\infty  \mathcal I_k :=  \bigcup_{k=0}^\infty   [ 2^k 2^j  r,     2^{k+1}  2^j r ].\]
Consequently, 
\[  \sup_{2^j r \le t} \tilde {\mathcal E}^N_\ts (x-tu)  \le \sum_{k=0}^\infty   \sup_{t\in \cI_k } \tilde {\mathcal E}^N_\ts (x-tu)  .\]
To show \eqref{ju}, we handle  $ \sup_{t\in \cI_k } \tilde {\mathcal E}^N_\ts  (x-tu)$, separately. Indeed, \eqref{ju} follows if we prove 
 \[
\int_{\mathbb R^d}  \sup_{t\in \cI_k } \tilde {\mathcal E}^N_\ts  (x-tu)\, dx  \le C\frac{2^j}{ 2^k r}. 
\]
Summation over $k$  gives \eqref{ju}.

Since $2^{-j} t\in [ 2^kr,  2^{k+1}r]$ for $t\in \cI_k$, it is easy to see $ \tilde {\mathcal E}^N_\ts  \lesssim    \tilde {\mathcal E}^N_{2^kr} .$
Moreover, recalling  \eqref{musupp}, we may assume that  $1/4 \le |u| \le 3/4$. 
Thus, the matter is reduced to showing 
\Be 
 \label{juk}
\int_{\mathbb R^d}  \sup_{t\in \cI_k } \tilde {\mathcal E}^N_\tsk  (x-tu)\, dx  \le C\frac{2^j}{2^k r}
 \Ee
 for $1/4 \le |u| \le 3/4$. 
To prove this, we follow the similar strategy used before to prove the estimate \eqref{ff2}.

Since $1/4 \le |u| \le 3/4$, one can easily see that $\tilde {\mathcal E}^N_\tsk(|u|x)\sim \tilde {\mathcal E}^N_\tsk(x)$. Thus,  by changing variables 
$x\to |u|x$, we may assume $|u|=1$. From Lemma \ref{l-constant},  note that   $\tilde {\mathcal E}^N_\tsk$ has a locally constant  property at scale $\tsk$.
Thus, we cover  
  the interval  $\cI_k $ with essentially disjoint  intervals  $I_\ell$ of length 
  \[ \delta:=2^kr\]  
  such that 
\[   \cI_k \subset \bigcup_{\ell=0}^{M-1} I_\ell:=\bigcup_{\ell=0}^{M-1} [t_\ell, t_{\ell+1}  ],   \quad \cI_k\cap I_\ell\neq \emptyset    .\]

Note that $|t_\ell u-  t_{\ell+1} u|=  2^k r$. 
Using Lemma \ref{l-constant}, as before, we obtain 
\[ \sup_{t\in \cI_k } \tilde {\mathcal E}^N_\tsk  (x-tu)\lesssim \sum_{\ell=0}^{M-1}  \tilde {\mathcal E}^N_\tsk  (x-t_\ell u).\]
Recalling \eqref{etil}, we note that $\|\tilde {\mathcal E}^N_\rho\|_{L^1(\mathbb{R}^d)}\lesssim \rho^{-1}$. Thus, we have 
\begin{align*}
\int_{\mathbb R^d}  \sup_{t\in \cI_k } \tilde {\mathcal E}^N_\tsk  (x-tu)\, dx   \le \sum_{\ell=0}^{M-1}  \| \tilde {\mathcal E}^N_\tsk  (\cdot-t_\ell u)\|_{L^1(\mathbb{R}^d)} \lesssim  \frac M {\tsk}. 
\end{align*}
Sine the length of $\cI_k$ is $2^{k}  2^j r$,  $M\le   2^j + 2$.   
Therefore, we obtain \eqref{juk}, which completes the proof of Theorem \ref{endp}.

\section{Proof of Theorem \ref{decay+dim}}
\label{sec3}

In this section, we prove Theorem \ref{decay+dim}. As noted previously, conditions \eqref{decay} and \eqref{dim} are invariant under translation and dilation of the measure $\mu$, up to a change in the constant $C$. Since $\mu$ is compactly supported, as before we may assume without loss of generality, via suitable translation and dilation, that   
\[ \supp \mu\subset B(0,1)\] (cf. \eqref{musupp}).

\begin{lemma}\label{conv}  
Let $R \ge 1$ and $N> d+1 $. Recalling \eqref{ekt}, let $\phi_R = \etk_\rho$  with $\rho=R^{-1}$. Suppose  $\mu$ is a Borel measure  satisfying  \eqref{dim}  with 
$\supp \mu\subset B(0,1)$. 
Then, for $t\in I:=[1,2]$, we have
\begin{equation*}
 | \phi_R \ast \mu_t (x)| \le C R^{d-b} (1 + |x|)^{-N+d+1} .
\end{equation*}
\end{lemma}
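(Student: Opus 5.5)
We write the convolution out explicitly and bound it using the Frostman condition \eqref{dim} on dyadic annuli. Since $\phi_R=\etk_\rho$ with $\rho=R^{-1}$, we have
\[
\phi_R\ast\mu_t(x)=\int \etk_\rho(x-ty)\,d\mu(y)=\int R^{d}\bigl(1+R|x-ty|\bigr)^{-N}\,d\mu(y).
\]
The plan is to estimate this integral by splitting $y$ according to the size of $|x-ty|$. Since $t\in[1,2]$, the map $y\mapsto ty$ only distorts distances by a factor at most $2$. Hence
\[
\mu\bigl(\{y: |x-ty|\le s\}\bigr)=\mu\bigl(B(x/t,s/t)\bigr)\le C s^b.
\]

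The argument splits according to the size of $x$.

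\textbf{Case $|x|\le 4$.} We decompose into the ball $\{|x-ty|\le \rho\}$ and the annuli $A_k=\{2^{k-1}\rho<|x-ty|\le 2^k\rho\}$, $k\ge1$. On the ball the integrand is at most $R^d$ and the measure is at most $C\rho^b$, giving a contribution $\lesssim R^{d-b}$. On $A_k$ the integrand is at most $R^d 2^{-kN}$ and the measure is at most $C(2^k\rho)^b$, giving $R^{d-b}2^{k(b-N)}$. This is summable because $N>d\ge b$. The total is therefore $\lesssim R^{d-b}$, which matches the claimed bound since $(1+|x|)^{-N+d+1}\sim1$ in this range.

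\textbf{Case $|x|>4$.} Since $\supp\mu\subset B(0,1)$ and $t\le2$, we have $|ty|\le 2$, and so $|x-ty|\ge |x|/2$. The integrand is then bounded by
\[
R^d (R|x|/2)^{-N}\lesssim R^{d-N}|x|^{-N}.
\]
Multiplying by the total mass $\|\mu\|$, this contribution is $\lesssim R^{d-N}|x|^{-N}$. Because $R\ge1$ and $N>d\ge b$, we have $R^{d-N}\le 1\le R^{d-b}$, and $|x|^{-N}\le (1+|x|)^{-N+d+1}$. This gives the stated bound; the slack in the exponent $-N+d+1$ is not even needed here.

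The only step needing care is the first case, namely the summation over annuli. This works because the Frostman exponent $b\le d<N$ guarantees geometric decay of the terms $2^{k(b-N)}$. No genuine obstacle arises beyond keeping track of the factor $t\in[1,2]$ in the ball rescaling.
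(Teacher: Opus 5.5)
Your proof is correct and follows the paper's argument. A dyadic decomposition in $|x-ty|$ at scale $R^{-1}$, combined with \eqref{dim}, gives the uniform bound $\lesssim R^{d-b}$, and $|x-ty|\ge |x|/2$ handles large $|x|$. The only difference is in the large-$|x|$ region. There the paper peels off $(1+|x|)^{-(N-d-1)}$ and reruns the Frostman sum with the remaining exponent $d+1$. You instead use a pointwise bound times the total mass (finite by \eqref{dim}), which gives the even stronger decay $R^{d-N}|x|^{-N}$. Note that $|x|^{-N}\le(1+|x|)^{-N+d+1}$ holds for $|x|>4$ only up to a constant depending on $N$ and $d$, which is harmless.
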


\begin{proof} We begin by noting that   
\begin{align*}
| \phi_R \ast \mu_t(x) |   \le  R^d \int (1 + R|x-ty|)^{-N} d\mu(y)  .
\end{align*}
Since $R\ge 1$ and 
$\supp \mu\subset B(0,1)$,  it follows  that  
$ (1+ R|x-ty|)^{-1} \le C (1 + |x|)^{-1}$ for $t\in I$ if $|x|\ge 3$. 
Thus, it is sufficient to show that 
\[  \int (1 + R|t^{-1}x-y|)^{-(d+1)} d\mu(y) \le CR^{-b} \]
because $t\in [1,2]$. By dyadic decomposition, we note that 
\[  (1 + R|t^{-1}x-y|)^{-(d+1)} \lesssim   \sum_{k=0}^\infty   2^{-(d+1)k}  \chi_{B(t^{-1}x, 2^kR^{-1})}(y).   \] 
Thus,  we have
\[ \int (1 + R|t^{-1}x-y|)^{-(d+1)} d\mu(y)  \lesssim  \sum_{k\ge 0}   2^{- (d+1)k} \mu (B(t^{-1}x, 2^k R^{-1})). \] 

Therefore, using \eqref{dim}, we have
\begin{align*}
\int (1 + R|t^{-1}x-y|)^{-(d+1)} d\mu(y) &\lesssim \sum_{k\ge 0}   2^{- (d+1)k} (2^k R^{-1})^b\le C  R^{-b}
.\end{align*}
This completes the proof.
\end{proof}

\subsection{Decomposition via Littlewood--Paley decomposition} 
Let $P_j$ denote  the standard Littlewood--Paley projection operator given by \[ \widehat{P_j f}(\xi) = \beta(2^{-j}|\xi|)\widehat f(\xi).\]
In particular, for each $k\in \mathbb Z$, we have
\Be
\label{k-decom}
 f(x) =  P_{\le k} f(x) +  \sum_{j \ge1} P_{k+j} f(x), 
\Ee
where $P_{\le k} = \sum_{j \le k} P_j$.

 Note that
\[ 
M_\mu f=\sup_{t>0}  |f\ast \mu_t|= \sup_{-\infty < k <\infty} \sup_{2^{-k}\le t< 2^{-k+1}}  |f\ast \mu_t|. 
\]
Using \eqref{k-decom},  we have
\[
\sup_{2^{-k}\le t< 2^{-k+1}}  |f\ast \mu_t|\le   \sup_{2^{-k}\le t< 2^{-k+1}}  |P_{\le k} f\ast \mu_t|  +  \sum_{j=1} 
^{\infty} \sup_{2^{-k}\le t< 2^{-k+1}}  |P_{k+j}f\ast \mu_t|.
\]
Therefore, taking supremum over $k$, we have 
\[ 
M_\mu f\le   \sum_{j=0} 
^{\infty} M_j f, 
\]
where 
\begin{align}
   M_0 f&=   \sup_{-\infty < k <\infty}   \sup_{2^{-k}\le t< 2^{-k+1}}  |P_{\le k} f\ast \mu_t|,  \label{m0}
   \\
  M_j f&= \sup_{-\infty < k <\infty}   \sup_{2^{-k}\le t< 2^{-k+1}}   |P_{k+j}f\ast \mu_t|, \quad j\ge 1. 
  \label{mj}
  \end{align}

With a large integer $N> 2d+1$, for $\ell \in \mathbb Z$,  let us set 
\Be\label{tk}   \tilde K_\ell   (x)=\etk_{2^{-\ell}}. \Ee
For $t\in [2^{-k}, 2^{-k+1}] $ and $j\ge 0$,  we have
\Be
\label{kast}
  |\tilde K_{k+j} \ast \mu_t(x)| \le C  2^{dk}  2^{j(d-b)} (1+  2^k|x| )^{-N} .\Ee
Indeed,   note that
\begin{align*}
  \tilde K_{k+j} \ast \mu_t(x)&=  2^{d(k+j)} \int (1+ 2^{k+j}|x- ty |)^{-N} d\mu(y)
   \\ 
  & \le C2^{d(k+j)} \int (1+ 2^{j}| t^{-1}x- y|)^{-N} d\mu(y) 
  \end{align*}
  for $t\in [2^{-k}, 2^{-k+1}] $. 
  By Lemma \ref{conv}, we have
  \[\tilde K_{k+j} \ast \mu_t(x)   \lesssim  2^{d k}  2^{j(d-b)} .\]
 Also, note $| t^{-1}x- y|> 2^{-1} |t^{-1} x|$ if  $ |x|> 2^{-k+2}$. So, we have 
  \begin{align*}
  \tilde K_{k+j} \ast \mu_t(x)  \lesssim 
     2^{d(k+j)}  (1+ 2^{k+j}|x|)^{-N}   
   \end{align*} 
  provided that $ |x|> 2^{-k+2}$.  Combining the above two inequalities, we obtain \eqref{kast}.

Since the kernel of $P_{\le k}$  is bounded by a constant   
times  $ \tilde K_{k}$. We have   $|P_{\le k}f\ast \mu_t|  \le C |f|\ast \tilde K_{k} \ast \mu_t$.  By combining this and  \eqref{kast} it follows that 
\[  |P_{\le k}f\ast \mu_t|  \le C |f|\ast \tilde K_{k}\]
for $t\in [2^{-k}, 2^{-k+1}] $.  Recalling \eqref{m0} and taking supremum over $k$, we note that
\[  M_0 f \le CM_{H\!L} f,\] 
where $M_{H\!L}$ is the Hardy--Littlewood maximal operator. 
Thus, $M_0$ is bounded on $L^p$ for $1<p\le \infty$.  Hence,  to prove Theorem \ref{decay+dim}, we only need to show 
\Be \label{rweak} \Big\|\sum_{j\ge 1}^\infty M_j  f\Big\|_{L^{p_{(a,b)},\infty}(\R^d)} \le \|f\|_{L^{p_{(a,b)},1}(\R^d)}.  \Ee
which, of course, gives \eqref{rsweak}. Interpolation between \eqref{rsweak} and the trivial $L^\infty$ estimate gives \eqref{maximal} for 
$p>p_{(a,b)}$.

Similarly as before, thanks to  Lemma \ref{B-Sum-Tri},   to show the restricted weak-type \eqref{rweak} it is sufficient to obtain  
\Be \label{int}  \| M_j f\|_{L^p(\R^d)}\le C 2^{j (2(d-b) + 2a -1)(\frac1p- \frac{d-b+2a-1}{ 2(d-b) + 2a -1})} \|f\|_{L^p(\R^d)}   \Ee
for $1<p\le 2$. Recall \eqref{pab} and note that the exponent becomes zero when $p=p_{(a, b)}$. 
This in turn follows from interpolation between the two estimates: 
\begin{align} 
\label{mjL1}  \|M_j f\|_{L^{1,\infty}(\R^d)} & \le C 2^{j(d-a)} \|f\|_{L^1(\R^d)}, 
\\ 
\label{mjL2}  \|M_j f\|_{L^{2}(\R^d)} & \le C 2^{j(\frac12-a)} \|f\|_{L^2(\R^d)}.
\end{align}

The first inequality \eqref{mjL1} is easy to verify. 
Since the kernel of $P_{k+j}$ is bounded by a constant   
times  $ \tilde K_{k+j}$, we have $|P_{k+j}f\ast \mu_t|  \le C |f|\ast \tilde K_{k+j} \ast \mu_t$. Thus, using the inequality \eqref{kast}, 
 we have 
\[    |P_{k+j}f\ast \mu_t|  \le C 2^{j(d-a)} \tilde K_k\ast |f|(x)  \]  
for $t\in [2^{-k}, 2^{-k+1}] $. 
Therefore, from \eqref{mj}  it follows that
\[ M_j f(x)  \le C 2^{j(d-a)} M_{H\!L} \ast |f|(x)  .\]
Consequently, by the weak $L^1$ bound for the Hardy--Littlewood maximal function we obtain \eqref{mjL1}.


\subsection{Proof of  $L^2$ estimate \eqref{mjL2}} 
For the purpose of proving \eqref{mjL2},  we make use of the next lemma.

\begin{lemma} 
\label{loc-glob} Let $2\le p< \infty$,  $j>0$. Suppose that 
\Be\label{local-assum} \|M_\mu^{loc} P_j f\|_{L^p(\R^d)}\le C 2^{\eta j} \|f\|_{L^p(\R^d)}\Ee
for some $\eta\in \mathbb R$. Then, we have 
\[ \|M_j f\|_{L^p(\R^d)}\le C 2^{\eta j} \|f\|_{L^p(\R^d)}.\] 
\end{lemma}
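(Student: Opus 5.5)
The approach is the standard local-to-global argument via scaling and Littlewood--Paley square functions, which is where the hypothesis $p\ge 2$ enters. First I rescale the assumption \eqref{local-assum}. For $k\in\mathbb Z$, let $D_k f(x)=f(2^{-k}x)$. A change of variables gives
\[ \sup_{2^{-k}\le t<2^{-k+1}}|P_{k+j}f\ast\mu_t| = D_k\big(M_\mu^{loc} P_j (D_k^{-1}f)\big), \]
possibly with $[1,2]$ replaced by $[1,2)$, which is harmless. Since $D_k$ scales all $L^p$ norms by the same factor, \eqref{local-assum} yields, uniformly in $k$,
\[ \Big\|\sup_{2^{-k}\le t<2^{-k+1}}|P_{k+j}f\ast\mu_t|\Big\|_{L^p}\le C2^{\eta j}\|f\|_{L^p}. \]

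Next I insert a fattened projection. Let $\tilde P_j$ be a Littlewood--Paley operator whose multiplier $\tilde\beta(2^{-j}|\xi|)$ equals $1$ on the support of $\beta(2^{-j}|\cdot|)$, with $\tilde\beta$ supported in a slightly larger annulus. Then $P_{k+j}f=P_{k+j}\tilde P_{k+j}f$. I bound the supremum over $k$ by the $\ell^p$ sum, and then by the $\ell^2$ sum, which is legitimate because $p\ge2$:
\[ \|M_jf\|_{L^p}^p\le\sum_k\Big\|\sup_{t\sim2^{-k}}|P_{k+j}\tilde P_{k+j}f\ast\mu_t|\Big\|_p^p\le C2^{\eta jp}\sum_k\|\tilde P_{k+j}f\|_p^p . \]
Applying the rescaled bound to $\tilde P_{k+j}f$ gives
\[ \sum_k\|\tilde P_{k+j}f\|_p^p=\Big\|\Big(\sum_k|\tilde P_{k+j}f|^p\Big)^{1/p}\Big\|_p^p\le\Big\|\Big(\sum_k|\tilde P_{k+j}f|^2\Big)^{1/2}\Big\|_p^p\lesssim\|f\|_p^p . \]
Here I use $\ell^2\subset\ell^p$ for $p\ge2$ and then the Littlewood--Paley square function inequality for $1<p<\infty$. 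Summing over $k$ in this way is what the argument relies on.

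The main delicacy is handling the sup over $k$ with the $\ell^p$ sum when the local bound is stated for $P_j$ rather than for a general function. This is exactly why the fattened projection $\tilde P$ is needed, and why the condition $p\ge2$ is required. The implicit constant is independent of $j$, since the square function bound for $\{\tilde P_{k+j}\}_k$ is uniform under reindexing.
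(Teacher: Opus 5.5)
Your argument is correct and is essentially the paper's: rescale the local bound to each band $2^{-k}\le t<2^{-k+1}$, dominate $\sup_k$ by the $\ell^p$ sum, insert $\tilde P_{k+j}$ with $\beta=\tilde\beta\beta$, and reduce to $\sum_k\|\tilde P_k f\|_p^p\lesssim\|f\|_p^p$ for $p\ge 2$. The one difference is the last inequality: the paper proves it more elementarily by interpolating between $p=2$ (Plancherel) and $p=\infty$ (uniform $L^1$ bounds on the kernels), while you use $\ell^2\subset\ell^p$ and the Littlewood--Paley square function theorem; also, your scaling identity has the dilations reversed (it should read $D_k^{-1}\bigl(M_\mu^{loc}P_jD_kf\bigr)$), which does not affect the uniform-in-$k$ bound.
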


\begin{proof}  By scaling it is easy to see that 
\[   P_{k+j}f\ast \mu_t(x)=   P_{j}(f)_{2^{-k}}\ast \mu_{2^k t}(2^k x) .   \]
Thus, it follows that 
\Be
\label{kkk}
 \sup_{2^{-k}\le t< 2^{-k+1}}   |P_{k+j}f\ast \mu_t(x)| =  
M_\mu^{loc} P_{j}(f)_{2^{-k}}(2^k x) .
\Ee

Recalling the definition of $M_j$ and using $\ell^p\hookrightarrow \ell^\infty$, we have 
\[  (M_j f)^p \le   \sum_k \Big(  \sup_{2^{-k}\le t< 2^{-k+1}}   |P_{k+j}f\ast \mu_t(x)| \Big)^p  .\]
Thus, by \eqref{kkk} we have  
\[  \|M_j f\|^p_{L^p(\R^d)} \le  \sum_{k=-\infty}^\infty  2^{-dk}    \| M_\mu^{loc} P_{j}(f)_{2^{-k}}\|_{L^p(\R^d)}^p.\]

Recall that $\beta\in C_c^\infty((1/2,2))$ is chosen in Section \ref{sec:endp}.
Let $\tilde \beta\in C_c^\infty((1/4, 4))$ such that $\beta=\tilde\beta \beta$. Define  
\[  \widehat{\tilde P_{j}f}(\xi) =  \tilde \beta(2^{-j}|\xi|)  \widehat f(\xi).\]
Using the assumption \eqref{local-assum}, we have
\begin{align*}  \|M_j f\|^p_{L^p(\R^d)} \le C   2^{\eta p j} 2^{-dk} \sum_{k=-\infty}^\infty   \| 
\tilde P_j (f)_{2^{-k}}\|_{L^p(\R^d)}^p 
\le C  2^{\eta p j} \sum_{k=-\infty}^\infty   \| 
\tilde P_{j+k} f \|_{L^p(\R^d)}^p. 
\end{align*} 
 The desired inequality now follows 
since 
\[  \Big( \sum_{k=-\infty}^\infty   \| 
\tilde P_{k} f \|_{L^p(\R^d)}^p\Big)^\frac1p  \le C\|f\|_{L^p(\R^d)}\]
for $2\le p\le \infty$. This  inequality can be shown by interpolation between the estimates for $p=2$ and $p=\infty$. 
The case $p=2$ is a straightforward consequence of the Plancherel theorem and the case $p=\infty$ is clear since the kernel of  $\tilde P_{k}$ has a uniformly bounded $L^1$ norm. 
\end{proof}

Thanks to Lemma \ref{loc-glob}, to prove \eqref{mjL2}, we only need to show 
\Be\label{mjL2-local}   \|M_\mu^{loc} P_j f\|_{L^{2}(\R^d)} \le C 2^{j(\frac12-a)} \|f\|_{L^2(\R^d)}.\Ee

\begin{lemma}\label{decay-} Let   $\mu$  be  a Borel measure with $\supp \mu\subset B(0,1)$ and $\psi$ be a smooth function on $\mathbb R^d$.  Suppose  \eqref{decay} holds. Then, we have
\[ | \widehat {\psi\mu}(\xi)|\le C|\xi|^{-a} .\]
\end{lemma}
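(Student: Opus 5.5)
The plan is to write $\widehat{\psi\mu}$ as the convolution $\widehat\psi\ast\widehat\mu$ and split the convolution integral according to whether the frequency shift is small or large compared to $|\xi|$.

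\emph{Reduction to compactly supported $\psi$.} Since $\supp\mu\subset B(0,1)$, fix $\chi\in C_c^\infty(B(0,2))$ with $\chi=1$ on $B(0,1)$. Then $\psi\mu=(\chi\psi)\mu$, so we may replace $\psi$ by $\chi\psi\in C_c^\infty(\R^d)$. Its Fourier transform $\widehat{\chi\psi}$ is a Schwartz function. Abusing notation, we still call it $\widehat\psi$.

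\emph{Convolution identity.} Up to the harmless normalizing constant of the Fourier transform,
\[
\widehat{\psi\mu}(\xi)=\int_{\R^d}\widehat\psi(\eta)\,\widehat\mu(\xi-\eta)\,d\eta .
\]
This holds because $\psi\mu$ is a finite measure and $\widehat\psi\in\mathcal S$, so Fubini applies. We will also use the trivial bound $|\widehat\mu|\le\|\mu\|$.

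\emph{Small $|\xi|$.} If $|\xi|\le1$, then $|\widehat{\psi\mu}(\xi)|\le\|\psi\|_\infty\|\mu\|\le C|\xi|^{-a}$, since $a>0$ gives $|\xi|^{-a}\ge1$.

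\emph{Large $|\xi|$.} For $|\xi|\ge1$, split the $\eta$-integral into two regions.
\begin{itemize}
\item On $\{|\eta|\le|\xi|/2\}$ we have $|\xi-\eta|\ge|\xi|/2$. By \eqref{decay}, $|\widehat\mu(\xi-\eta)|\le C2^a|\xi|^{-a}$, so this part is at most $C\|\widehat\psi\|_{L^1}|\xi|^{-a}$.
\item On $\{|\eta|>|\xi|/2\}$ we use $|\widehat\mu|\le\|\mu\|$ together with the rapid decay $|\widehat\psi(\eta)|\le C_N(1+|\eta|)^{-N}$ for some $N>d+a$. This part is at most $C_N\|\mu\|\,|\xi|^{-N+d}\le C|\xi|^{-a}$.
\end{itemize}
Adding the two pieces gives the claim.

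There is no real obstacle here. The only point needing care is the first step: the lemma as stated assumes only that $\psi$ is smooth, with no decay at infinity. The support assumption on $\mu$ is exactly what lets us cut $\psi$ off to a compactly supported function, so that $\widehat\psi$ is Schwartz and integrable.
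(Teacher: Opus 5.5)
Your proof is correct and is essentially the paper's argument. The paper expands $\psi$ in a Fourier series on $[-\pi,\pi]^d$, writes $\widehat{\psi\mu}(\xi)=\sum_{\mathbf n} C_{\mathbf n}\,\widehat\mu(\xi-\frac{\mathbf n}{2\pi})$, and splits the sum into $|\mathbf n|\le|\xi|$ and $|\mathbf n|>|\xi|$; this is the discrete analogue of your convolution $\widehat{\chi\psi}\ast\widehat\mu$ split at $|\eta|=|\xi|/2$. Your explicit cutoff $\chi$ supplies a step the paper only implies: rapid decay of its coefficients $C_{\mathbf n}$ really needs $\psi$ to be first replaced by a smooth cut-off (periodic) function, since smoothness on $[-\pi,\pi]^d$ alone does not give it.
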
 

\begin{proof}  Since $\psi$ is smooth on $[-\pi, \pi]^d$, expanding $\psi$ in Fourier series, we have 
\[ \psi(x)= \sum_{\mathbf n\in \mathbb Z^d}  C_{\mathbf n} e^{i\mathbf n\cdot x}\]
with $C_{\mathbf n}$ satisfying $|C_{\mathbf n}|\le C(1+|\mathbf n|)^{-M}$  
for any $M$.  Since $\supp \mu\subset B(0,1)$, 
  \[ \widehat {\psi\mu}(\xi)=  \int e^{-2 \pi i x \xi}  \psi(x)  d\mu(x)   =  \sum C_{\mathbf n}\, \widehat \mu(\xi-\frac {\mathbf n}{2\pi}).\]
Splitting the sum into  two cases  $|\mathbf n|\le |\xi|$  and $|\mathbf n|> |\xi|$, we  have 
\[  | \widehat {\psi\mu}(\xi)|\le \sum_{|\mathbf n|\le |\xi|} |C_{\mathbf n}| | \widehat \mu(\xi-\frac {\mathbf n}{2\pi})| +   \sum_{|\mathbf n|> |\xi  
|}  |C_{\mathbf n}|| \widehat \mu(\xi-\frac {\mathbf n}{2\pi})|.\] 
Using use \eqref{decay} and rapid decay of $|C_{\mathbf n}|$, we  obtain 
  \begin{align*} 
 | \widehat {\psi\mu}(\xi)| 
\le  \sum_{|\mathbf n|\le |\xi|} |C_{\mathbf n}|  |\xi|^{-a}+ C  \sum_{|\mathbf n|> |\xi  
|}  (1+|\mathbf n|)^{-N} 
\le   C  |\xi|^{-a} 
\end{align*} 
as desired. 
\end{proof} 

Let $F(x,t)$  be a function defined on $\mathbb R^d\times I$ that is  differentiable in $t$. 
Applying the fundamental theorem of calculus to  $|F(x, \cdot)|^r$ and integrating in $x$, we have 
\[ 
\| \sup_{t\in I}| F(x,t) | \|_{L^r(\mathbb R^d)} \lesssim  \|F \|_{L^r(\mathbb R^d \times I)} + \| F\|_{L^r(\mathbb R^d \times I)}^{(r-1)/r}\|\partial_t F \|_{L^r(\mathbb R^d \times I)}^{1/r} .
\]
Applying Young's inequality to 
\[    \| F\|_{L^r(\mathbb R^d \times I)}^{(r-1)/r}\|\partial_t F \|_{L^r(\mathbb R^d \times I)}^{1/r}=2^{j\frac{r-1}{r^2}}\| F\|_{L^r(\mathbb R^d \times I)}^{(r-1)/r} \times 2^{-j\frac{r-1}{r^2}}\|\partial_t F \|_{L^r(\mathbb R^d \times I)}^{1/r}\]  
with exponents $r/(r-1)$ and $r$, we have
\begin{equation}\label{ftc}  \| \sup_{t\in I}| F(x,t) | \|_{L^r(\mathbb R^d)} \lesssim 
2^{\frac j r} \| F\|_{L^r(\mathbb R^d \times I)} 
+    2^{ j(\frac 1r-1)}  \|\partial_t F \|_{L^r(\mathbb R^d \times I)}
\end{equation}
(see, for example, \cite{L}).

\begin{proof}[Proof of \eqref{mjL2-local}]  Applying  the inequality \eqref{ftc} to $F(x,t)= \mu_t\ast P_j f(x)$, we have 
\[ \|M_\mu^{loc} P_j f\|_{L^2(\R^d)} \lesssim   2^{\frac j 2}  \|P_j f\ast \mu_t \|_{L^2(\mathbb R^d \times I)}+  2^{- \frac j2}  \|   \partial_t (P_j f\ast \mu_t) \|_{L^2(\mathbb R^d \times I)}.\]

Define a measure $\mu_l$ by  setting 
\[   (g, \mu_l)=   - 2\pi i \int g(x)\,  x_l\, d\mu(x), \quad g\in  C_c( \mathbb  R^d) .\] 
Since $\widehat \mu_l(\xi)=  \partial_l  \widehat{\mu}$, we observe that 
\begin{align*}
 \partial_t  \big( f\ast (P_j\mu)_t \big)  
&  =   \int e^{2\pi i x \cdot \xi} \beta(2^{-j}\xi) \widehat f(\xi)  \sum_{l=1}^d \partial_{l}   \widehat \mu(t\xi)\xi_l\, d\xi \\
& = 2^j \sum_{l=1}^d \int e^{2\pi i x \cdot \xi} \beta (2^{-j}\xi) \widehat f_l(\xi) \widehat{ \mu_l} (t\xi) d\xi \\
& =2^j \sum_{l=1}^d  (P_j  f_l)\ast (\mu_l)_t ,
\end{align*}
where $\widehat{f_l}(\xi) = 2^{-j}\xi_l \widehat f(\xi)$.

Therefore,  by Plancherel's theorem, \eqref{mjL2-local} follows if we show 
\[ \|  \beta(2^{-j}|\cdot|)  \widehat f\, \widehat{\mu}(t\cdot)  \|_{L^2(\mathbb R^d \times I)} \lesssim 2^{- aj} \|f\|_{L^2(\R^d)}\] 
and 
 \[ \|  \beta(2^{-j}|\cdot|)  \widehat f_l\, \widehat{\mu_l}(t\,\cdot)  \|_{L^2(\mathbb R^d \times I)} \lesssim 2^{- aj} \|f\|_{L^2(\R^d)}.\] 
The first is a straightforward consequence of \eqref{decay}. For the latter, since 
$${\| \beta(2^{-j}|\cdot|)  \widehat f_l \|_{L^2(\R^d)}\lesssim \|f\|_{L^2(\R^d)}},$$ we only need to show 
\[  |\widehat{\mu_l}(\xi)|\le C|\xi|^{-a}.\]
This follows from Lemma \ref{decay-}. 
\end{proof}

\subsection{Sharpness of the range of $p$} \label{sharpness} 
 Sharpness of the range in Theorem \ref{decay+dim} for some specific cases  also can be shown by considering 
some examples.  For later use, we show a slightly stronger result that  proves failure $L^p$ boundedness of the 
local maximal function.

 For the purpose, we use the following elementary lemma.

\begin{lemma}
\label{lorentz}   Let $0<\beta \le d$ and $\gamma \le 1$.  Set 
\[  h(x) = |x|^{-\beta} (\log 1/|x| )^{-\gamma } \chi_{B(0,2^{-1/d})}(x) .\] 
Then, $h\in L^{\frac d\beta, r}(\R^d)$ if $r> \frac 1 \gamma$ but $h\not\in L^{\frac d\beta, \frac1\gamma}(\R^d)$. 
\end{lemma}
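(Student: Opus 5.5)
We compute the decreasing rearrangement $h^*$ explicitly and then evaluate the Lorentz quasinorm
\[
\|h\|_{L^{p,r}}^r \sim \int_0^\infty \big(s^{1/p} h^*(s)\big)^r \frac{ds}{s}, \qquad p=\frac d\beta .
\]
The radius $2^{-1/d}<1$ is chosen so that $\log(1/|x|)\ge \frac{\log 2}{d}>0$ on the support, which keeps $h$ well defined and positive. Near the origin $h$ is radially decreasing: the function $\rho\mapsto \rho^{-\beta}(\log 1/\rho)^{-\gamma}$ has logarithmic derivative $-\beta/\rho+\gamma/(\rho\log(1/\rho))$, which is negative once $\log(1/\rho)>\gamma/\beta$. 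On the remaining part of the ball, where $\rho$ is bounded away from $0$, $h$ is bounded above and below by constants. Since $h$ is radial, the set $\{h>h(\rho)\}$ is essentially the ball of radius $\rho$ for small $\rho$. This gives, for $0<s\le s_0$,
\[
h^*(s)\sim h\big((s/|B_1|)^{1/d}\big) \sim s^{-\beta/d}\,(\log 1/s)^{-\gamma},
\]
where $\log(1/\rho)\sim \frac1d\log(1/s)$. For $s_0<s<|B(0,2^{-1/d})|$ we get $h^*(s)\sim 1$, and $h^*(s)=0$ beyond that. If the exact monotonicity region is awkward, it is enough to compare $h$ above and below with monotone radial majorants and minorants of the same form, since $L^{p,r}$ membership only depends on $h^*$ up to constants and dilations of $s$.

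Substituting into the quasinorm with $p=d/\beta$, the power $s^{1/p}=s^{\beta/d}$ cancels $s^{-\beta/d}$ exactly. The part $s>s_0$ contributes a finite amount, so
\[
\|h\|_{L^{d/\beta,r}}^r \sim C + \int_0^{s_0} (\log 1/s)^{-\gamma r}\,\frac{ds}{s}.
\]
With $u=\log(1/s)$ the integral becomes $\int_{u_0}^\infty u^{-\gamma r}\,du$, which is finite if and only if $\gamma r>1$, that is, $r>1/\gamma$. This gives $h\in L^{d/\beta,r}$ for $r>1/\gamma$ and $h\notin L^{d/\beta,1/\gamma}$.

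The same computation covers the edge cases. When $\gamma\le 0$ the claim $r>1/\gamma$ contains every $r\ge 1$ only in the trivial sense, and if $1/\gamma<1$ the statement is to be read with quasinorms. In the intended range $0<\gamma\le 1$ the argument above is complete. For $r=\infty$ and $\gamma>0$ the relevant quantity is $\sup_s s^{\beta/d}h^*(s)\sim \sup (\log 1/s)^{-\gamma}<\infty$, which is consistent with the claim.

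The only step that needs care is justifying the asymptotic form of $h^*$: the monotonicity issue near $\rho$ with $\log(1/\rho)\approx \gamma/\beta$, and the logarithmic factor changing only by constants under $\rho\leftrightarrow s^{1/d}$. Both are handled by restricting to small $s$ and absorbing everything else into bounded terms.
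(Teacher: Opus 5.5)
Your argument is correct and is essentially the paper's proof in continuous form. The paper writes $h\sim\sum_{k\ge1}2^{\beta k/d}k^{-\gamma}\chi_{\{2^{-(k+1)/d}\le|y|<2^{-k/d}\}}$ (annuli of measure $\sim 2^{-k}$) and uses the dyadic description of the $L^{p,r}$ quasinorm to reduce to $\|\{k^{-\gamma}\}_{k\ge1}\|_{\ell^r}$; this is the discretization at $s=2^{-k}$ of your $\int_0^{s_0}(\log 1/s)^{-\gamma r}\,ds/s=\int_{u_0}^\infty u^{-\gamma r}\,du$.

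One correction to your side remark on edge cases. For $\gamma<0$ the same computation shows $h\notin L^{d/\beta,r}$ for every $r$. For $\gamma=0$ one has $h\in L^{d/\beta,\infty}$. So the lemma genuinely requires $0<\gamma\le1$ rather than holding ``trivially'' for $\gamma\le 0$, and $0<\gamma\le1$ is exactly the range your proof covers.
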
 

\begin{proof}
On the other hand, we have 
\[
h= |x|^{-\beta} (\log 1/|x| )^{-\gamma} \chi_{B(0,2^{-1/d})} \sim  \sum_{k=1}^{\infty} 2^{\frac{\beta}{d}k}  k^{-\gamma} \chi_{\{ y:  2^{-(k+1)/d} \le  |y| <  2^{-k/d}\} }(x).
\]
Then, from the definition of Lorentz spaces we have 
\[ \| h\|_{L^{p,r}(\R^d)} \sim \| \{ 2^{\frac{\beta}{d}k}  k^{-\gamma} 2^{-k/p} \}_{k=1}^\infty \|_{\ell_k^r}\] 
(see, for example, \cite[Remark 6.7]{T2}). 
In particular, with $p=\frac d\beta$, 
this gives 
$\| h\|_{L^{\frac d\beta,r}(\R^d)}  \sim \| \{ k^{-\gamma}\}_{k=1}^\infty \|_{\ell^r} < \infty$ if and only if  $r >1/\gamma$. 
\end{proof}

In particular, let us consider 
the measure given by 
\[    d\mu_\alpha(x) = \frac{1}{\Gamma(\alpha)}
 \bigl(1 - |x|^2\bigr)^{\alpha-1}_+ dx,   \quad \alpha \in (0, 1). \] 
 Via analytic continuation, the measure  $ \mu_\alpha$ coincides with the surface measure on the unit sphere when $\alpha=0$.   
The  Fourier transform of $\mu_\alpha$ is given by 
\[   \widehat{\mu_\alpha}(\xi)=  \pi^{-\alpha+1}
|\xi|^{-d/2-\alpha+1}
J_{d/2+\alpha-1}(2\pi |\xi|), \]
where $J_\beta$ denotes the Bessel function of order $\beta$ (see \cite[p.~171]{Stein1970}).   Thus, $\mu_\alpha$ satisfies  \eqref{decay} with $a =(d-1)/2+ \alpha$. A simple computation shows  that   $\mu_\alpha$ also satisfies \eqref{dim} with $b = d-1+\alpha$.   Thus, we have 
\[ p_{(a,b)} = d/(d-1+\alpha).\] 

It suffices to show that \eqref{pqmax} fails for $p \le p_{(a,b)}$.
Failure of the maximal estimate for $p< d/(d-1+\alpha)$ is easy to show.  Considering $f=\chi_{B(0, \delta)}$, one can easily  see 
\[M_{\mu_\alpha}^{loc}  f(x) \gtrsim \delta^{d-1+\alpha}, \quad  1\le |x|\le 2.\]
Thus, the maximal inequality \eqref{pqmax} implies 
$ \delta^{d-1+\alpha}\le C \delta^\frac dp.$ Letting 
$\delta\to 0$ gives the condition $p\ge d/(d-1+\alpha)$. 
Thus, if $p< p_{(a,b)}:=d/(d-1+\alpha)$,  \eqref{pqmax} fails.

 To show failure of the maximal bound for 
$p= d/(d-1+\alpha),$  we consider 
\[  f(x)
= |x|^{-(d-1+\alpha)} \Big (\log \frac{1}{|x|}\Big)^{-1} \chi_{B(0,1/2)}(x).   \] 
We claim that 
\Be
\label{infty}  f\ast (\mu_\alpha)_{|x|} (x)  =\infty  
\Ee 
for $1\le |x|\le 2$. By Lemma \ref{lorentz}, $f\in L^{\frac{d}{d-a+1},r}(\R^d)$ for any $r>1.$ Thus,  \eqref{infty}   shows 
that  $M_\mu $
cannot be bounded from $L^{\frac{d}{d-a+1},r}(\R^d)$  to 
$L^{\frac{d}{d-a+1},\infty}(\R^d)$ for any $r>1$.

Since both $f$ and $\mu_\alpha$ are radial, so is  $f\ast \mu_\alpha$.  Thus, $f\ast (\mu_\alpha)_{|x|} (x)= f\ast (\mu_\alpha)_{|x|} (|x|e_1)$.  Using this, we note that 
\begin{align*}  f\ast (\mu_\alpha)_{|x|} (x) 
&= \frac{1}{\Gamma(\alpha)}  \int f(|x|(1-y_1, \bar y))    \bigl(1 - y_1^2- |\bar y|^2\bigr)^{\alpha-1}_+  dy,  
\end{align*}
where  we write 
$y=(y_1, \bar y)\in \mathbb R\times \mathbb R^{d-1}$. 
Changing variables $s=1-y_1$ and then  $\tau=s(2-s)$   gives 
\begin{align*}
f\ast (\mu_\alpha)_{|x|} (|x|e_1)  
&\gtrsim  \int_0^{1/2}\int f(|x|(s, \bar y))    \bigl(  s(2-s)- |\bar y|^2\bigr)^{\alpha-1}_+  d\bar y\,ds
\\ 
&\gtrsim   \int_0^{3/4} \int_{s(\tau)\le |\bar y|}  f(|x|(s(\tau), \bar y))    \bigl(  \tau - |\bar y|^2\bigr)^{\alpha-1}_+  d\bar y\,d\tau.
\end{align*}
Since  $ s\le\tau \le 3s/2$ and $1\le |x|\le 2$, it follow that 
\begin{align*}
f\ast (\mu_\alpha)_{|x|} (|x|e_1)   &\gtrsim   \int_0^{3/4}\int_{\tau \le |\bar y|}  f(|x|(s(\tau), \bar y))    \bigl(  \tau - |\bar y|^2\bigr)^{\alpha-1}_+  d\bar y \, d\tau . 
\end{align*}  
Moreover, we have $ f(|x|(s(\tau), \bar y)) \sim   |\bar y|^{-(d-1+\alpha)} (\log \frac{1}{|\bar y|})^{-1} $ provided that 
$ f(|x|(s(\tau), \bar y))\neq 0$.  Thus, for a small constant $c>0$, we have 
\[f\ast (\mu_\alpha)_{|x|} (|x|e_1)   \gtrsim  \int_{|\bar y|< c} \int_{0<\tau \le |\bar y|}   |\bar y|^{-(d-1+\alpha)} (\log \frac{1}{|\bar y|})^{-1}   \bigl(  \tau - |\bar y|^2\bigr)^{\alpha-1}_+  d\tau\, d\bar y.\]

Note that $\int_0^{|\bar y|}    \bigl(  \tau - |\bar y|^2\bigr)^{\alpha-1}_+  d\tau\sim |\bar y|^\alpha$.  Consequently,  it follows that 
\begin{align*}
f \ast (\mu_\alpha)_{|x|} (x) & \gtrsim  \int_{\mathbb R^{d-1}} |\bar y|^{-d+1} (\log \frac{1}{|\bar y|})^{-1} \chi_{B(0, c)}(\bar y) d \bar y 
\end{align*}
for $1\le |x| \le 2$. The right hand is clearly infinite. Thus, we  conclude \eqref{infty}.

\subsection{Failure of the spherical maximal bound on   $L^{\frac d{d-1}, r}(\R^d)$} \label{sss} 
In this section we show failure of the maximal bound for the local spherical maximal function for the critical exponent $p=\frac d{d-1}$.

\begin{proposition} 
\label{lorentz-s} Let $\sigma$ be the normalized surface measure on $\mathbb S^{d-1}$ and 
\[M_{sph}^{loc}f (x ) = \sup_{t\in I} |f \ast \sigma_t(x)|.\] 
Then 
\Be
\label{s-endp}
 \| M_{sph}^{loc} f \|_{L^{\frac{d}{d-1},\infty}(\R^d)} \lesssim \| f\|_{L^{\frac{d}{d-1},r}(\R^d)}
 \Ee
  holds only if $r \le  1$.  
\end{proposition}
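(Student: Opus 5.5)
The plan is to imitate the argument for $\mu_\alpha$ in Section~\ref{sharpness}, now with $\alpha=0$, i.e.\ $\mu=\sigma$. Take
\[
f(x)=|x|^{-(d-1)}\Big(\log\frac1{|x|}\Big)^{-1}\chi_{B(0,1/2)}(x).
\]
Lemma~\ref{lorentz} with $\beta=d-1$ and $\gamma=1$ gives $f\in L^{\frac d{d-1},r}(\R^d)$ for every $r>1$. (The lemma uses the cutoff radius $2^{-1/d}$ rather than $1/2$; this is harmless, since on the annulus between the two radii $f$ is bounded and compactly supported.)

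The aim is to show that $f\ast\sigma_{|x|}(x)=\infty$ for every $1\le|x|\le 2$. Granting this, $M^{loc}_{sph}f=\infty$ on a set of positive measure, so \eqref{s-endp} fails for every $r>1$. This establishes the claim that \eqref{s-endp} can hold only when $r\le1$. (For $r=1$ the estimate is Bourgain's, but that direction is not needed.)

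To prove the divergence, I use radial symmetry to reduce to $x=|x|e_1$, and write
\[
f\ast\sigma_{|x|}(|x|e_1)=\int_{\mathbb S^{d-1}} f\big(|x|(e_1-\theta)\big)\,d\sigma(\theta).
\]
Near $\theta=e_1$, parametrize the sphere as $\theta=(\sqrt{1-|\bar y|^2},\bar y)$ with $\bar y\in\mathbb R^{d-1}$ small. Then $e_1-\theta=(1-\sqrt{1-|\bar y|^2},\,-\bar y)$. Its first component is of size $|\bar y|^2/2$, so $|e_1-\theta|\sim|\bar y|$, and the surface measure is comparable to $d\bar y$. This gives
\[
f\ast\sigma_{|x|}(|x|e_1)\gtrsim\int_{|\bar y|<c}|\bar y|^{-(d-1)}\Big(\log\frac1{|\bar y|}\Big)^{-1}d\bar y .
\]
In polar coordinates in $\mathbb R^{d-1}$ this becomes $\int_0^c \rho^{-1}(\log 1/\rho)^{-1}\,d\rho=\infty$.

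The main point needing care is the comparability $f(|x|(e_1-\theta))\sim|\bar y|^{-(d-1)}(\log 1/|\bar y|)^{-1}$. This holds uniformly for $1\le|x|\le2$ once $c$ is small enough that $|x||e_1-\theta|<1/2$. The factor $|x|\le 2$ only changes the logarithm by a bounded ratio for small $|\bar y|$. Otherwise the computation is simpler than the $\mu_\alpha$ case, because no $\tau$-integration is needed.
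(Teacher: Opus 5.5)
Your proposal is correct and follows the paper's proof essentially step for step. It uses the same test function $|x|^{-(d-1)}(\log\frac1{|x|})^{-1}\chi_{B(0,1/2)}$, Lemma~\ref{lorentz} with $\beta=d-1$, $\gamma=1$, the radial reduction to $x=|x|e_1$, and the graph parametrization $\theta=(\sqrt{1-|\bar y|^2},\bar y)$ near $e_1$, leading to the divergent integral $\int_{|\bar y|<c}|\bar y|^{-(d-1)}(\log\frac1{|\bar y|})^{-1}\,d\bar y$. Your extra checks fill in details the paper leaves implicit: the comparability $|e_1-\theta|\sim|\bar y|$, the uniform control of the logarithm for $1\le|x|\le 2$, and the harmless change of cutoff radius in Lemma~\ref{lorentz}.
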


\begin{proof}
Let us consider 
\[
f(x) = |x|^{-d+1} (\log \frac{1}{|x|})^{-1} \chi_{B(0,1/2)}(x).
\]
Since $f\in L^{\frac{d}{d-1},r}$ for all $r>1$ by Lemma \ref{lorentz}, in order to prove  failure of the estimate \eqref{s-endp}, it is sufficient to show that 
\Be
\label{maxi}
M_{sph}^{loc} f (x) = \infty 
\Ee
for $  1\le  |x| \le 2$. 

As before,   $f\ast \sigma$ is radial.   Thus,    
\[ f\ast \sigma_{|x|} (x)=f\ast \sigma_{|x|} (|x|e_1)=  \int_{\mathbb S^{d-1}} f(|x|(1-y_1, \bar y)) d\sigma(y).\] 
We parametrize the sphere near $e_1$, i.e., $y_1=\psi(\bar y):=\sqrt{1- |\bar y|^2}$, and  note that $1-\psi(\bar y) \le  |\bar y|$. Since $|x|\sim 1$,   we have 
\begin{align*}
f \ast \sigma_{|x|} (x) & \gtrsim  \int_{\mathbb R^{d-1}} |\bar y|^{-d+1} (\log \frac{1}{|\bar y|})^{-1} \chi_{B(0,1/2)}(\bar y) d \bar y.
\end{align*}
  Note that the right hand side is infinite.  Thus, \eqref{maxi} follows. 
\end{proof}

\section{$L^p$-improving estimates for the local maximal operator}
\label{sec4}
In this section, we consider $L^p$--$L^q$ bound for the local maximal operator $M_\mu^{loc}$.  

\subsection{Estimates relying on \eqref{decay} and \eqref{dim}} 
The estimates used for the proof of  Theorem \ref{decay+dim}  can be combined with  $L^1$--$L^\infty$ estimate for frequency localized maximal average, 
to prove  the estimate  \eqref{pqmax}  for some $(p,q)$ satisfying  $q >p$ and $1/q \ge 1-1/p$.  

For given $a$ and $b$,  let $\Delta  \subset[0,1]^2$ denote the  closed triangle  with vertices 
\Be
\label{vertices} O=(0,0), \  P=(1/p_\circ,1/p_\circ),  \   Q = (1/p_{(a,b)},1/p_{(a,b)}'),
\Ee
where $p_\circ = \min \{ p_a ,p_{(a,b)} \}$. By $[P,Q]$ we also  denote the closed line segment in the plane connecting $P, Q$. As mentioned in the introduction, we have the following theorem, which is largely depending on $b$ in contrast with Theorem \ref{RF}.

\begin{theorem}\label{thm:pq}
Let $\mu$ be a compactly supported Borel measure. Suppose that \eqref{decay} and \eqref{dim} hold.
Then, we have  the following. 

\begin{enumerate}[leftmargin=17pt, itemsep=2pt 
]
 \item[$(1)$] If $b\ge d-1$, then  \eqref{pqmax} holds for $(1/p,1/q) \in \Delta \setminus [P,Q]$.

 \item[$(2)$]
If $b< d-1$, then
\eqref{pqmax} holds for $(1/p,1/q) \in \Delta \setminus \{ P,Q \} $. 
\end{enumerate} 
Moreover, for $(1/p,1/ q) = P,Q$, we have  the restricted weak-type estimate  
\begin{equation}\label{pqrw}
\| M_\mu^{loc} f \|_{L^{q,\infty}(\R^d)} \le C \| f\|_{L^{p,1}(\R^d)}.
\end{equation}
\end{theorem}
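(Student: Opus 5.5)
The plan is to run the same frequency decomposition as in Sections \ref{sec:endp} and \ref{sec3}, now for the single scale $t\in I$, and then interpolate. We write
\[
M_\mu^{loc} f\le \sup_{t\in I}|P_{\le 0}f\ast\mu_t|+\sum_{j\ge1} M_\mu^{loc}P_jf .
\]
The first term is bounded by $C\,M_{H\!L}f$, as in the treatment of $M_0$. It is also bounded by $|f|\ast\tilde K_0$, which is a compactly supported-type kernel in $L^1\cap L^\infty$ by \eqref{kast}. Hence it is bounded from $L^p$ to $L^q$ for all $(1/p,1/q)\in\Delta$.

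For the pieces $M_\mu^{loc}P_j$ I would collect three endpoint bounds.
\begin{enumerate}[leftmargin=17pt]
\item[(i)] \emph{$L^1$--$L^\infty$.} Since $|P_jf\ast\mu_t|\le C|f|\ast\tilde K_j\ast\mu_t$, Lemma \ref{conv} gives $\|\tilde K_j\ast\mu_t\|_\infty\lesssim 2^{j(d-b)}$ uniformly in $t\in I$. Therefore $\|M_\mu^{loc}P_jf\|_{L^\infty}\lesssim 2^{j(d-b)}\|f\|_{L^1}$.
\item[(ii)] \emph{$L^2$.} This is \eqref{mjL2-local}: $\|M_\mu^{loc}P_jf\|_{L^2}\lesssim 2^{j(\frac12-a)}\|f\|_{L^2}$.
\item[(iii)] \emph{Diagonal.} Since $M_\mu^{loc}\le M_\mu$, Theorem \ref{endp} and Theorem \ref{decay+dim} (for $b>d-1$) already give the restricted weak-type bound \eqref{pqrw} at $P=(1/p_\circ,1/p_\circ)$. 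Equivalently, one can use the per-$j$ diagonal bounds \eqref{eq:tj} and \eqref{int}, whose exponents vanish exactly at $p_a$ and $p_{(a,b)}$ respectively.
\end{enumerate}

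For the point $Q$, interpolate (i) and (ii) along the line $1/q=1-1/p$. At $1/p=1-\theta/2$ the exponent of $2^j$ is $(1-\theta)(d-b)+\theta(\frac12-a)$. A short computation shows it vanishes precisely when $1/p=(d-b+2a-1)/(2(d-b)+2a-1)=1/p_{(a,b)}$. Taking two points on either side of $Q$ on this line, Lemma \ref{B-Sum-Tri} gives \eqref{pqrw} at $Q$.

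Interior points of $\Delta$ and points on the open segments $(O,P)$ and $(O,Q)$ then follow by interpolating with the trivial $L^\infty$ bound, together with the per-$j$ estimates. Each such point is a convex combination of $O$ (exponent $0$) and points strictly inside the regions where the $j$-exponent is negative, so the series in $j$ is geometrically summable there.

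The delicate part is the edge $[P,Q]$. The $j$-exponent is an affine function of $(1/p,1/q)$ vanishing at both $P$ and $Q$, so it vanishes on the whole segment and summation in $j$ gives nothing there. Instead I would interpolate the two restricted weak-type estimates \eqref{pqrw} at $P$ and $Q$ directly, using real interpolation of $L^{p,1}\to L^{q,\infty}$ bounds.
\begin{itemize}[leftmargin=17pt]
\item \emph{Case $b<d-1$.} Then $p_\circ=p_a<p_{(a,b)}$, so $P$ and $Q$ have different $1/p$ coordinates, and $q\ge p$ along the segment. The off-diagonal Marcinkiewicz (Stein--Weiss) interpolation theorem upgrades the restricted weak-type bounds to strong type $L^p\to L^q$ on the open segment. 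This gives part (2).
\item \emph{Case $b\ge d-1$.} Then $p_\circ=p_{(a,b)}$, so $P$ and $Q$ share the same $p$ and $[P,Q]$ is vertical. Interpolation only yields $L^{p,1}\to L^{q}$-type bounds, not strong type, so $[P,Q]$ is excluded in part (1).
\end{itemize}

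I expect the main obstacle to be this edge analysis. One must check that the interpolation really gives strong type exactly off the vertical case and that no stronger per-$j$ estimate is secretly needed. In the borderline case $b=d-1$ one has $p_a=p_{(a,b)}$, and I would verify the bookkeeping there separately.
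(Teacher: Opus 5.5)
Your proposal is correct and follows the paper's proof essentially step by step. The restricted weak-type bound at $P$ is inherited from Theorems~\ref{endp} and \ref{decay+dim}; the bound at $Q$ comes from Lemma~\ref{B-Sum-Tri} applied to interpolants of \eqref{10} and \eqref{mjL2-local} on the line $1/q=1-1/p$; the rest follows by real interpolation, and the vertical segment $[P,Q]$ is excluded when $b\ge d-1$ for exactly the reason the paper gives.
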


\begin{proof}
Clearly,  Theorem \ref{endp} and  Theorem \ref{decay+dim} give restricted weak-type $(p,p)$ bound on  $ M_\mu^{loc} $ for  $p=p_{(a,b)}, p_a$.  
Thus, to prove  Theorem \ref{thm:pq}  we need only  to show  the restricted weak-type  $(p,q)$ estimate  \eqref{pqrw} for  $(1/p,1/ q) = Q$.  
Indeed, via real interpolation we obtain the desired conclusions ${(1)}$ and ${(2)}$ (see, for example, \cite{Stein1970}).   
It should be noticed that,  when $b\ge d-1$ (the case ${(1)}$),  interpolation of the estimates \eqref{pqrw} for $(1/p,1/ q) = P,Q$ does not give 
the strong bound \eqref{pqmax} for  $(1/p,1/ q)$ in the open line segment $(P,Q)$ since  $p_\circ=p_{(a,b)}$.

Similarly as  in the proof of Theorem \ref{decay+dim}, we have 
\Be
 \label{locj}  M_\mu^{loc} f \le \sum_{j=0}^\infty M_j^{loc} f,
\Ee
where
\begin{align*}
   M_0^{loc} f =    \sup_{t\in I}  |P_{\le 0} f\ast \mu_t|,    \quad 
  M_j^{loc} f=\sup_{t\in I}    |P_{j}f\ast \mu_t|, \quad j\ge 1
    \end{align*}
(cf. \eqref{m0} and \eqref{mj} with $k=0$).

To show  \eqref{pqrw} for  $(1/p,1/ q) = Q$, by Lemma \ref{B-Sum-Tri} we only need to show 
\[   \| M_j^{loc} f\|_{L^{p'}(\R^d)}\le C 2^{j (2(d-b) + 2a -1)(\frac1p- \frac{d-b+2a-1}{ 2(d-b) + 2a -1})} \|f\|_{L^p(\R^d)}   \] 
for  $1\le p\le 2$.
The estimate in turn follows by interpolation between \eqref{mjL2-local} and  
\begin{equation}
\label{10} \| M_j^{loc} f  \|_{L^\infty(\R^d)}   \lesssim  2^{(d-b)j}  \|f\|_{ L^1(\R^d)} .
\end{equation}

This estimate is easy to show by applying   \eqref{kast}. Indeed,  note  $|P_j f \ast \mu_t| \le C |f|\ast (\tilde K_j \ast \mu_t)$ for $t\in [1,2]$. 
Since   $\|\tilde K_j \ast \mu_t\|_{L^\infty(\R^d)}  \lesssim  2^{(d-b)j}$ for  $t\in [1,2]$ by Lemma \ref{conv}),  \eqref{10} follows.
\end{proof}

 Recall that   $p_\circ = p_{(a,b)}$  if $b\ge  d-1$, and  $p_\circ = p_a$  if $b < d-1$. 
 Thus,  the set 
$\Delta$ is given by 
\[
\Delta =  \Big\{ \Big( \frac1p,\frac1q \Big) : ~ \frac{d+2a -b-1}{(d-b)q} \ge  \frac {1}{p}\ge \frac1q, \  \     \frac {1} {p} \le \frac{1}{p_{(a,b)}} 
 \Big\}
\] 
if $b \ge d-1$.  When $b <  d-1$,
\[
\Delta = \Big\{ \Big( \frac1p,\frac1q \Big) :~ \frac{d+2a -b-1}{(d-b)q} \ge  \frac {1}{p}\ge \frac1q, \ \  2a + \frac{d-b-1}{q} \ge \frac {d+2a-b} {p}  \Big\} .
\]

Regarding  sharpness of Theorem \ref{thm:pq}, we can show that \eqref{pqmax} generally fails to hold if the last inequality appearing in each case is not satisfied. 
The inequalities determine the border line segment $(P, Q)$.

For the case $b  \ge d-1$, the condition $1/ {p} < 1/p_{(a,b)}$ is necessary for \eqref{pqmax}  to hold   under the assumptions    \eqref{decay} and \eqref{dim} as discussed in Section \ref{sharpness}.  
In the case of $b < d-1$,  the condition 
\Be 
\label{abd}
2a + \frac{d-b-1}{q} \ge \frac {d+2a-b} {p}
\Ee
also seems to be necessary for  \eqref{pqmax}.  
However, we can verify  this only for  $2a = b= k$, where  $k$ is an integer satisfying $2\le k \le d-1$.

Let $\mu$ be a smooth induced Lebesgue measure  on $k$-dimensional submanifold 
\Be
\label{Psi} \mathfrak S = \{ (u,\Psi(u)) \in \mathbb R^k\times \mathbb R^{d-k}: |u|\le c\}
\Ee  
for a sufficiently small $c>0$, where $\Psi$ is a smooth function in a neighborhood of the origin and $\Psi(0)\neq 0$.
Then $\mu$ becomes a Salem measure satisfying \eqref{decay} and \eqref{dim} with $2a = b=k$ if $\Psi$ satisfies  a nondegeneracy condition so-called \emph{strong curvature condition} on  $\mathfrak S$ such that, for every $\mathbf t \in \mathbb S^{d-1}$, 
\begin{equation}\label{strongcurvature}
| \det D^2\langle \mathbf t, \Psi ( u)\rangle | \ge \epsilon
\end{equation}
for all $| u|\le c$.   This type of nondegeneracy condition was assumed in \cite{SY, Sr} for counting rational points near/on submanifolds of codimension bigger than  one.  
See \cite{Ch82, Ba02} for earlier results in study of Fourier restriction under the same assumption.

In this case, \eqref{abd} equals 
\[ k + \frac{d-k-1}{q} \ge \frac {d} {p},
\]
which is necessary for \eqref{pqmax} to hold.  To show this, taking  $f= \chi_{B(0,\delta)}$, we get $f\ast  \mu_t(x) \gtrsim \delta^k$ if $x$ is contained in a $\delta$-neighborhood  $E(\delta)$  of $\cup_{t\in I} E_t$, where $E_t = \{t(u, \Psi(u)) : |u|  \le c \}$ for a sufficiently small constant $c>0$.  Note that $|E(\delta)|\gtrsim \delta^{d-k-1}$. Thus, 
\[ \delta^{k + \frac{d-k-1}{q}}\lesssim   \| \sup_{t\in I}  |f\ast  \mu_t|  \|_{L^q(\R^d)} .\]
 Since $\| f\|_{L^p(\R^d)} \lesssim \delta^{\frac d p}$, \eqref{pqmax} implies  $ \delta^{k + \frac{d-k-1}{q}}\lesssim    \delta^{\frac d p}$.
Letting $\delta \to 0$, we get the desired inequality.

\subsection{Exension via a Strichartz type  estimate} 
\label{ext}
We now discuss possible enlargement of the $p,q$ range in Theorem \ref{thm:pq} under an additional assumption.  For the purpose, we make use of  a Strichartz type estimate for the averaging operator $P_j f\ast \mu_t$.    In order to do so, in addition to \eqref{decay} and \eqref{dim}, we further assume  that  the measure $\mu$ satisfies 
a form of dispersive estimate 
\begin{equation}\label{dispersive}
\| P_j(\mu_t\ast  \overline \mu_s)\|_{L^\infty(\R^d)} \lesssim 2^{(d-2a) j}  (1+ 2^j|t-s|)^{-\delta}, \quad t, s\in  I
\end{equation}
for all $j\ge 1$ and some $\delta>0$,  while  $a, b,$ and $\delta$ satisfy 
\Be
\label{adelta-con}
2 a+\delta> b.
\Ee

It is easy to see that \eqref{decay} implies the estimate  \eqref{dispersive} when
$2^j |t-s| \le 1$. Moreover, one can readily verify that the estimate
\eqref{dispersive} does not depend on the particular choice of the cutoff
function $\beta$ defining the projection operator $P_j$.

The assumption \eqref{dispersive} is closely related to dispersive estimates
for evolution operators such as the wave and Schr\"odinger operators.
Such dispersive estimates play a crucial role in determining the admissible
pairs $(q,r)$ of exponents for the mixed-norm spaces $L_t^q L_x^r$ into which
the propagators map the initial data. In particular, the decay rate reflects
the curvature properties of the characteristic surfaces associated with the
underlying evolution operators (see, for example, \cite{KT, T}).

However, for a general measure $\mu$, the averaging operator
$f \mapsto P_j \mu_t * f$ cannot, in general, be expressed in the form of an
evolution operator. Moreover,  little is known about which fractal
measures satisfy \eqref{dispersive} for given constants $\alpha$ and $\delta$.
The most viable case appears to be when $2a \le d$, where geometric and
combinatorial arguments are more likely to yield such estimates for fractal
measures with specific structural properties, without using the Fourier transform.
 
Let us set 
\[ 
R=\begin{cases}  R_1:=   \Big(
\frac{(d-b+2a)(\delta+1)-d}{2(d-b+a)(\delta+1)-d}, \,
\frac{\delta(d-b)}{2(d-b+a)(\delta+1)-d}
\Big)  & \text{ if }   2a(\delta+1) >d,  
\\   
R_2:=    \Big(
\frac12,\,
\frac{d-2a-\delta}{2(d-\delta-1)}
\Big)    & \text{ if }    2a(\delta+1) \le d.  
\end{cases} 
\]

One can easily  verify that \eqref{adelta-con} 
ensures that the points
$R_1$ and $R_2$ lie outside~$\Delta$.

\begin{theorem}\label{pqr}
Let $\mu$ be a compactly supported Borel measure satisfying \eqref{decay} and \eqref{dim} for some $a>1/2$ and $ 0<b\le d$. 
Suppose that  \eqref{dispersive} holds with   $\delta$ satisfying  \eqref{adelta-con}. Then, 
 \eqref{pqmax} holds for $(1/p,1/q)$ contained in the interior of the convex hull of the points $O$, $P$, $Q$, and  $R$.  
Moreover,  if $2a(\delta+1) \neq d$,   \eqref{pqrw} holds for  $(1/ p_1, 1/q_1) = R$.
%
\end{theorem}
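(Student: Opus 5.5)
The plan is to prove a frequency-localized $L^p$--$L^q$ bound for the pieces $M_j^{loc}$ from \eqref{locj}, with an explicit power of $2^j$. We then apply Lemma \ref{B-Sum-Tri} to two such bounds on either side of $R$, which gives the restricted weak-type estimate \eqref{pqrw} at $R$. Finally, real interpolation with the estimates at $O$, $P$, $Q$ from Theorem \ref{thm:pq} yields strong bounds in the interior of the convex hull of $O,P,Q,R$.

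\emph{Step 1: reduction to space-time norms.} Apply \eqref{ftc} with $r=q$ to $F(x,t)=P_jf\ast\mu_t$. This reduces matters to bounding $2^{j/q}\|P_jf\ast\mu_t\|_{L^q(\R^d\times I)}$ and $2^{j(1/q-1)}\|\partial_t(P_jf\ast\mu_t)\|_{L^q(\R^d\times I)}$. As in the proof of \eqref{mjL2-local}, write $\partial_t(P_jf\ast\mu_t)=2^j\sum_l P_jf_l\ast(\mu_l)_t$, where $\mu_l$ is $x_l\mu$ up to a constant and $f_l$ is a harmless multiplier transform of $f$. Both terms then have the same form $T_jg(x,t)=P_jg\ast\nu_t$ with $\nu\in\{\mu,\mu_l\}$, so it suffices to prove
\[
\|T_j g\|_{L^q(\R^d\times I)}\lesssim 2^{\gamma j}\|g\|_{L^p(\R^d)}.
\]
For this we need \eqref{dispersive} also for $\nu=\mu_l$, i.e.\ for $\psi\mu$ with $\psi$ smooth. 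I would try to get it by expanding $\psi$ in a Fourier series as in Lemma \ref{decay-}, using the remark that \eqref{dispersive} is insensitive to the choice of $\beta$. This is one of the points I expect to be delicate: modulation by $e^{i\mathbf n\cdot x}$ shifts the frequency annulus, and only a rapidly decaying tail must be absorbed.

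\emph{Step 2: $TT^*$ / Strichartz argument.} The operator $T_jT_j^*$ acting on $G(x,s)$ is $\int_I \tilde P_j(\nu_t\ast\overline{\nu}_s)\ast G(\cdot,s)\,ds$. The $s$-slice operators obey two bounds. From \eqref{decay} they have $L^2\to L^2$ norm $\lesssim 2^{-2aj}$. From \eqref{dispersive} they have $L^1\to L^\infty$ norm $\lesssim 2^{(d-2a)j}(1+2^j|t-s|)^{-\delta}$. Interpolating at $L^{q'}\to L^{q}$ with $\theta=1-2/q$ gives the kernel bound
\[
2^{-2aj}\,2^{d\theta j}(1+2^j|t-s|)^{-\delta\theta}.
\]
We then integrate in $s$ over $I$. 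If $\delta\theta>1$, the $t$-kernel has $L^1$ norm $\sim 2^{-j}$ and we use Young's inequality. If $\delta\theta<1$, we use Hardy--Littlewood--Sobolev, or simply Schur's test in $t$ on $L^{q'}_t\to L^q_t$, since $I$ is bounded. The borderline $\delta\theta=1$ is exactly $2a(\delta+1)=d$ at the relevant exponent, which is why that case is excluded from the endpoint claim. This yields $T_j:L^2\to L^q(\R^d\times I)$ with an explicit power of $2^j$. Combined with the prefactors from Step 1, it gives a bound for $\|M_j^{loc}\|_{L^2\to L^q}$. In the case $2a(\delta+1)\le d$, choosing $q$ so that the exponent of $2^j$ vanishes produces the point $R_2=(1/2,\,\cdot\,)$.

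\emph{Step 3: the case $2a(\delta+1)>d$, and summation.} Here we interpolate the $L^2\to L^q$ bound of Step 2, for a suitable $q$, with the $L^1\to L^\infty$ bound \eqref{10}, namely $\|M_j^{loc}\|_{L^1\to L^\infty}\lesssim 2^{(d-b)j}$. The zero of the exponent of $2^j$ along the interpolation segment is $R_1$. In both cases we produce two points near $R$ on a line through $R$, where the exponents $\pm\varepsilon$ have opposite signs. Lemma \ref{B-Sum-Tri} then gives \eqref{pqrw} at $R$, and the term $M_0^{loc}$ is trivially bounded by the Hardy--Littlewood maximal operator. Real interpolation of this with the restricted weak-type bounds at $P$ and $Q$ and the trivial $L^\infty$ bound at $O$ gives \eqref{pqmax} in the open hull.

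The main obstacle I anticipate is the exponent bookkeeping in Steps 2--3. One must choose the auxiliary $q$ so that the Young/HLS regime is the correct one and the interpolation lands exactly on $R_1$ or $R_2$. One must also check, using \eqref{adelta-con}, that these points lie outside $\Delta$, so that the hull is genuinely larger.
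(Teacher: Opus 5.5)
You follow the paper's architecture: \eqref{ftc}, a $TT^*$ Strichartz bound obtained by interpolating the $L^2$ bound $2^{-2aj}$ with \eqref{dispersive}, Lemma \ref{B-Sum-Tri} against \eqref{10} for $R_1$, and real interpolation. For $R_2$, pairing two $L^2\to L^{q_\pm}$ bounds is equivalent to the paper's pairing of \eqref{2q} with \eqref{mjL2-local}, because below the critical exponent the power of $2^j$ is affine in $1/q$. But your Step 2 bookkeeping is wrong, and it affects the endpoint.

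\textbf{Step 2 regimes.} Young's inequality from $L^{q'}_t$ to $L^q_t$ needs the $L^{q/2}$ norm of $k(\tau)=(1+2^j|\tau|)^{-\delta\theta}$ on $[-1,1]$, not its $L^1$ norm. That norm is:
\begin{itemize}
\item $\approx 2^{-2j/q}$ if $q>2(\delta+1)/\delta$;
\item $\approx 2^{-\delta\theta j}$ if $q<2(\delta+1)/\delta$;
\item $\approx 2^{-2j/q}$ up to a power of $j$ at $q=2(\delta+1)/\delta$.
\end{itemize}
So the dividing line is $\theta(\delta+1)=1$, not $\delta\theta=1$. At $R_2$, $\delta\theta=1$ means $d=2a\delta+1$, which is unrelated to $2a(\delta+1)=d$.

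\textbf{The choice of $q$ for $R_1$.} Interpolation with \eqref{10} lands exactly on $R_1$ only when $q=2(\delta+1)/\delta$; a subcritical $q$ puts the zero on the open segment $(R_1,Q)$. So your ``suitable $q$'' must be the critical exponent, i.e.\ you need exactly Proposition \ref{prop:stri} and \eqref{2q}. There Hardy--Littlewood--Sobolev applied to $2^{-\delta\theta j}|t-s|^{-\delta\theta}$ is indispensable. Your fallback via Schur/Young loses a power of $j$, and with it the restricted weak-type bound at $R_1$.

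\textbf{The case $2a(\delta+1)=d$.} With the threshold corrected, your intuition is right: this is exactly when the zero on $\{1/p=1/2\}$ sits at the critical exponent, i.e.\ when the power in \eqref{2q} vanishes. But then your last step, which interpolates \eqref{pqrw} at $R$ with the bounds at $O,P,Q$, has no estimate at $R$ to use. The ``in both cases'' of Step 3 also contradicts your own exclusion. Pairing bounds from the two sides of $q=2(\delta+1)/\delta$ in Lemma \ref{B-Sum-Tri} does not reach $R_2$ either, because the available exponent has a kink there. The paper handles this case separately. It interpolates \eqref{2q}, the trivial $L^\infty$ bound and \eqref{mjL2-local} to get strong bounds with $2^{-\tau j}$ decay on the open triangle $O$, $(1/2,1/2)$, $R_2$. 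It then sums in $j$ and interpolates with Theorem \ref{thm:pq}.

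\textbf{The term $M_0^{loc}$.} At $R$ you need $q>p$, so dominate $M_0^{loc}f$ by $|f|\ast\tilde K_0$ and use Young's inequality. Domination by the Hardy--Littlewood maximal function does not give $L^p\to L^q$ with $q>p$.

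\textbf{The derivative term in Step 1.} The issue you flag is real, and the paper passes over it when deriving \eqref{2q} from Proposition \ref{prop:stri} and \eqref{ftc}. Your Fourier-series fix does not work. It transfers size bounds, as in Lemma \ref{decay-}, but not \eqref{dispersive}: with $e_{\mathbf n}(x)=e^{i\mathbf n\cdot x}$,
\[
\widehat{(e_{\mathbf n}\mu)_t}(\xi)\,\overline{\widehat{(e_{\mathbf m}\mu)_s}(\xi)}=\widehat\mu\big(t\xi-\tfrac{\mathbf n}{2\pi}\big)\,\overline{\widehat\mu\big(s\xi-\tfrac{\mathbf m}{2\pi}\big)},
\]
and this is a translate of $\widehat\mu(t\xi)\overline{\widehat\mu(s\xi)}$ only if $\mathbf n/t=\mathbf m/s$.

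A cleaner route avoids $\partial_t$ and $\mu_l$ entirely:
\begin{itemize}
\item Since $\supp\mu\subset B(0,1)$, the function $t\mapsto P_jf\ast\mu_t(x)$ has $t$-Fourier support in $\{|\tau|\lesssim 2^j\}$.
\item A local Bernstein inequality then bounds $\|\sup_{t\in I}|P_jf\ast\mu_t|\|_{L^q}$ by $2^{j/q}$ times the $L^q_{x,t}$ norm over a slightly enlarged interval, plus rapidly decaying tails.
\item \eqref{dispersive} on such an interval follows by rescaling, after cutting $I$ into short pieces and using that \eqref{dispersive} does not depend on the choice of $\beta$.
\end{itemize}
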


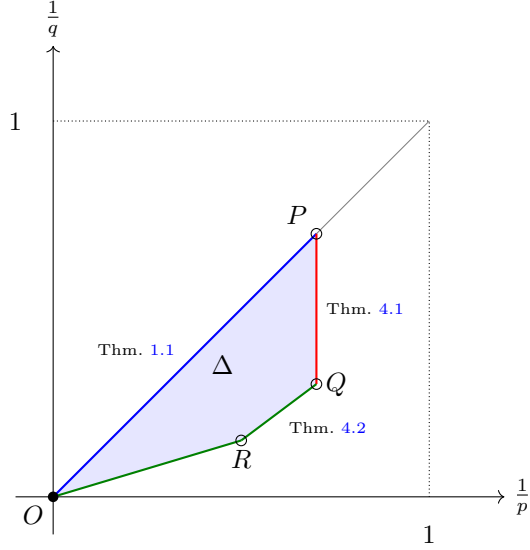
\begin{figure}[ht]
\centering
\begin{tikzpicture}[scale=5] 
    \draw[->] (-0.1,0) -- (1.2,0) node[right] {$\frac{1}{p}$};
    \draw[->] (0,-0.1) -- (0,1.2) node[above] {$\frac{1}{q}$};
    
    \draw[densely dotted] (0,1) -- (1,1) -- (1,0);
    \node at (1,-0.1) {$1$};
    \node at (-0.1,1) {$1$};
    
    \draw[gray, thin] (0,0) -- (1,1);
    
    \coordinate (O) at (0,0);
    \coordinate (P) at (0.7, 0.7);  
    \coordinate (Q) at (0.7, 0.3);  
    \coordinate (R) at (0.5, 0.15); 
    
    \fill[blue!10] (O) -- (P) -- (Q) -- (R) -- cycle;
    
    \draw[blue, thick] (O) -- (P) node[midway, above left, black, font=\tiny] {Thm. \ref{RF}}; 
    \draw[red, thick] (P) -- (Q) node[midway, right, black, font=\tiny] {Thm. \ref{thm:pq}};
    \draw[ao, thick] (Q) -- (R) node[midway, below right, black, font=\tiny] {Thm. \ref{pqr}};
    \draw[ao, thick] (R) -- (O);
    
    \node[above left] at (P) {$P$};
    \node[right] at (Q) {$Q$};
    \node[below] at (R) {$R$};
    \node[below left] at (O) {$O$};
    
    \fill (O) circle (0.4pt);
    \draw (P) circle (0.4pt);
    \draw (Q) circle (0.4pt);
    \draw (R) circle (0.4pt);
    
    \node at (0.45, 0.35) {$\Delta$};
\end{tikzpicture}
\caption{The $L^p$-improving region $\Delta$ described in Theorem \ref{pqr}, formed by the convex hull of $O, P, Q, R$.}
\label{fig:region}
\end{figure}

Clearly, as in Theorem \ref{thm:pq},  
some borderline cases of the convex hull can be included but we omit them for simplicity of the statement. 
We discuss below how Theorem \ref{pqr} applies to a few specific cases.   

We first recall the spherical measure $\sigma$ in $\mathbb R^d$ for $d\ge 3$. 
In this case,  we have \eqref{dispersive} with  $a = \delta = \frac{d-1}{2}$. This can be shown using the asymptotic expansion of the Fourier transform $\widehat{\sigma}$ of the spherical measure and Fourier decay of the surface measure on the cone $(\xi,|\xi|)$.  
So, the inequality $2a(\delta+1) > d$ holds. Hence, we have $R=R_1 = ( \frac{d(d-1)}{d^2+1}, \frac{d-1}{d^2+1} )$, which coincides with one of the endpoints of $L^p$-improving range for the spherical maximal function \cite{SS, L}.      More precisely, Schlag \cite{Schlag} (for $d=2$) and Schlag--Sogge \cite{SS} (for $d\ge 3$) showed that $M_{sph}^{loc}$ maps $L^p$ boundedly into $L^q$ for $(\tfrac1p,\tfrac1q)$ in the interior of the convex hull $\mathcal H$ of the points
\[
\mathbf O=(0,0),\quad 
\mathbf P_{2}=\Bigl(\tfrac{d-1}{d},\tfrac{d-1}{d}\Bigr),\quad 
\mathbf P_3=\Bigl(\tfrac{d-1}{d},\tfrac{1}{d}\Bigr),\quad 
\mathbf P_4=\Bigl(\tfrac{d(d-1)}{d^2+1}, \tfrac{d-1}{d^2+1}\Bigr),
\]
which correspond to the points $P$, $Q$, and $R$ in Theorem \ref{pqr}, respectively. They also showed that $M_{sph}^{loc}$ fails to be bounded from $L^p$ to $L^q$ whenever $(\tfrac1p,\tfrac1q)\notin \mathcal H$.

For the borderline cases, when $d=2$ (in which case $\mathbf P_2=\mathbf P_3$), the third author \cite[Theorem~1.1]{L} almost completely characterized the $L^p$--$L^q$ boundedness of $M_{sph}^{loc}$, except at $(\tfrac1p,\tfrac1q)=\mathbf P_4$. At this endpoint, a restricted weak-type $(p,q)$ estimate was established, while the corresponding strong-type bound remains open. When $d\ge 3$, restricted weak-type $(p,q)$ bounds at $(\tfrac1p,\tfrac1q)=\mathbf P_3$ and $\mathbf P_4$ were obtained in the same paper \cite{L} by adapting a similar strategy.

Taking advantage of this opportunity, we correct the statement in \cite[Theorem~1.4]{L} in which the pairs $(p, q)$ satisfying $(\tfrac1p,\tfrac1q)\in (\mathbf P_2,  \mathbf P_3)$ should be excluded 
from the strong boundedness range. Indeed,  by combining the restricted weak-type bounds at $(\tfrac1p,\tfrac1q)=\mathbf P_3$ and $\mathbf P_4$ with Bourgain's restricted weak-type $(\tfrac d{d-1},\tfrac d{d-1})$ estimate (see, for example, Theorem~\ref{endp}), and applying interpolation, we obtain the following result.

\begin{theorem}\label{leelee}
Let $d\ge 3$. There exists a constant $C>0$ such that
\[
\|M_{sph}^{loc} f\|_{L^q(\mathbb{R}^d)} \le C \|f\|_{L^p(\mathbb{R}^d)}
\]
for $(1/p,1/q)\in \mathcal H\setminus \bigl([\mathbf P_2,\mathbf P_3]\cup \{\mathbf P_4\}\bigr)$.
Moreover, for $(1/p,1/q)\in [\mathbf P_2,\mathbf P_3]\cup \{\mathbf P_4\}$, we have
\[
\|M_{sph}^{loc} f\|_{L^{q,\infty}(\mathbb{R}^d)} \le C \|f\|_{L^{p,1}(\mathbb{R}^d)}.
\]
\end{theorem}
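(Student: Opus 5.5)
The plan is to obtain Theorem~\ref{leelee} by real interpolation from three restricted weak-type estimates, taken as already established.

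\begin{enumerate}[leftmargin=17pt]
\item[$(i)$] At $\mathbf P_2=(\frac{d-1}{d},\frac{d-1}{d})$ we use Bourgain's restricted weak-type $(\frac d{d-1},\frac d{d-1})$ bound. It is Theorem~\ref{endp} with $a=\frac{d-1}2$, since then $p_a=\frac d{d-1}$, and it holds a fortiori for the local operator because $M_{sph}^{loc}\le M_{sph}$.
\item[$(ii)$] At $\mathbf P_3=(\frac{d-1}d,\frac1d)$ we use the restricted weak-type bound from \cite{L}. It can also be recovered as the point $Q$ of Theorem~\ref{thm:pq}: with $a=\frac{d-1}2$ and $b=d-1$ one gets $p_{(a,b)}=\frac d{d-1}$ and $p'_{(a,b)}=d$.
\item[$(iii)$] At $\mathbf P_4$ we use the restricted weak-type bound from \cite{L}, equivalently the point $R_1$ of Theorem~\ref{pqr} with $\delta=\frac{d-1}2$.
\end{enumerate}

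At $\mathbf O$ the operator is trivially bounded $L^\infty\to L^\infty$. These four points are the vertices of $\mathcal H$.

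For points of $[\mathbf P_2,\mathbf P_3]$ we interpolate the restricted weak-type bounds at $\mathbf P_2$ and $\mathbf P_3$. Both points have the same $p=\frac d{d-1}$, so along this segment only $q$ varies. Real interpolation for sublinear operators, with Lorentz spaces as in \cite{Stein1970}, then gives $L^{p,1}\to L^{q,\infty}$ at every point of the segment, but no strong bound. The reason is that the Marcinkiewicz interpolation theorem upgrades restricted weak-type estimates to strong type only when the two endpoints have different $p$ and different $q$. This is exactly the correction to \cite[Theorem~1.4]{L}.

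For the strong bounds, take $(1/p,1/q)\in\mathcal H\setminus([\mathbf P_2,\mathbf P_3]\cup\{\mathbf P_4\})$.
\begin{itemize}
\item If the point lies in the interior of $\mathcal H$, or in the open segment $(\mathbf O,\mathbf P_4)$, or in $[\mathbf O,\mathbf P_2)$ (Stein's theorem), the strong bound is either already known from \cite{SS} and Theorem~\ref{RF}, or follows by the argument below.
\item On the open edge $(\mathbf P_3,\mathbf P_4)$ we interpolate the restricted weak-type bounds at $\mathbf P_3$ and $\mathbf P_4$. These have distinct $p$ and distinct $q$, since $\frac{d-1}d\ne\frac{d(d-1)}{d^2+1}$ and $\frac1d\ne\frac{d-1}{d^2+1}$. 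The off-diagonal Marcinkiewicz theorem therefore gives strong type whenever $p\le q$, which holds on this edge, and the same applies to the other open edges.
\item A general point of $\mathcal H$ off $[\mathbf P_2,\mathbf P_3]\cup\{\mathbf P_4\}$ is written as lying on a segment joining two points among the vertices and the already-established edge points with distinct $p$ and $q$ coordinates, and we interpolate along it in the same way.
\end{itemize}

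The main check is geometric bookkeeping. We must verify that every point of $\mathcal H$ other than those on $[\mathbf P_2,\mathbf P_3]\cup\{\mathbf P_4\}$ lies on a segment between two restricted-weak points, or between a restricted-weak point and $\mathbf O$, that is not parallel to either coordinate axis. The only axis-parallel edge is $[\mathbf P_2,\mathbf P_3]$, so this holds. The other point to check is $\mathbf P_4$ itself: being an extreme point of $\mathcal H$, it cannot be reached by interpolation, and only the restricted weak-type bound (iii) is available there.
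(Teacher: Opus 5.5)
Your proposal is correct and follows the paper's own argument: restricted weak-type bounds at $\mathbf P_2$ (Bourgain, i.e.\ Theorem~\ref{endp} with $a=\frac{d-1}2$) and at $\mathbf P_3$, $\mathbf P_4$ (from \cite{L}, which you rightly identify with $Q$ and $R_1$ for $a=\delta=\frac{d-1}2$, $b=d-1$) are combined with the trivial $L^\infty$ bound by real interpolation, and along the vertical edge $[\mathbf P_2,\mathbf P_3]$ this yields only $L^{p,1}\to L^{q,\infty}$. One point is overstated: the fact that interpolation does not give strong type on that edge is not by itself the correction to \cite[Theorem~1.4]{L}, since it does not show that strong type fails; that failure comes from Proposition~\ref{lorentz-s}, which exhibits $f\in L^{\frac d{d-1}}$ with $M_{sph}^{loc}f=\infty$ on $1\le|x|\le2$.
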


Proposition~\ref{lorentz-s} shows that strong $L^p$--$L^q$ bounds fail, and that $L^{p,1}$ cannot be replaced by any larger Lorentz space $L^{p,r}$ with $r>1$, when $(1/p,1/q)\in [\mathbf P_2,\mathbf P_3]$. As noted in the proof of Theorem~\ref{thm:pq}, real interpolation does not yield the corresponding strong-type estimate.

In \cite{HK}, the  authors obtained maximal estimates associated with surfaces of half the ambient dimension in even dimensions $d=2n$ under the assumption \eqref{strongcurvature}.  For example, if $\mu$ is a surface measure on the complex curve $(x,y,x^2-y^2,2xy)$, which is a  2-dimensional surface in $\mathbb R^4$, then \eqref{dispersive} is satisfied with $a =\delta = 1$ and $d=4$. In this case,  $2a(\delta+1) \le 4$. Thus, we have  $R_2 = ( \frac12,  \frac14)$.   
For $n\ge 4$, we have \eqref{dispersive} with $a= \delta =n/2 $ in  $b = n$. Consequently, noting  $2a(\delta+1) >d$ gives  $R_1 = (\frac{2n}{3n+2}, \frac{n}{3n+2})$, which coincides with one of the vertices of the type set in   \cite[Theorem 1.2]{HK}.

 In order to prove Theorem \ref{pqr}, we  make use of the following Strichartz type estimates for the frequency localized average  $P_j f\ast \mu_t$, which we obtain by the standard $TT^\ast$ argument.

\begin{proposition}\label{prop:stri}
Suppose that   $\mu$ satisfies \eqref{decay} and \eqref{dispersive}. Then, we have the estimate 
\begin{equation}\label{eq:stri}
\| P_j f\ast \mu_t \|_{L^{\frac{2(\delta+1)}{\delta}}_{x,t}(\mathbb R^d\times I)} \le C 2^{ \frac{d-\delta-2a(\delta+1)}{2(\delta+1)} j} \|f\|_{L^2(\R^d)}.
\end{equation}
\end{proposition}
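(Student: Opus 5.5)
We prove \eqref{eq:stri} with a $TT^\ast$ argument. Define $T_j f(x,t)=P_j f\ast\mu_t(x)$ on $\mathbb R^d\times I$ and set $r=\frac{2(\delta+1)}{\delta}$. By duality it suffices to show
\[
\|T_jT_j^\ast F\|_{L^{r}_{x,t}}\lesssim 2^{\frac{d-\delta-2a(\delta+1)}{\delta+1}j}\|F\|_{L^{r'}_{x,t}}.
\]
Replacing $P_j$ by $P_j^2$ only changes the cutoff $\beta$, and \eqref{dispersive} is insensitive to that choice. With this, the operator takes the form
\[
T_jT_j^\ast F(x,t)=\int_I P_j(\mu_t\ast\overline{\mu}_s)\ast F(\cdot,s)\,(x)\,ds .
\]
(Here $\overline\mu_s$ is the reflected conjugate measure arising from the adjoint.) We then bound, for fixed $t,s$, the operator $G\mapsto P_j(\mu_t\ast\overline\mu_s)\ast G$ from $L^{r'}$ to $L^r$ in $x$, and treat the $s$-integration by fractional integration in one dimension.

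\textbf{Step 1 ($L^2$ bound).} Since $|\widehat\mu(t\xi)\overline{\widehat\mu(s\xi)}|\lesssim 2^{-2aj}$ on $|\xi|\sim 2^j$ for $t,s\in I$ by \eqref{decay}, Plancherel gives
\[
\|P_j(\mu_t\ast\overline\mu_s)\ast G\|_{2}\lesssim 2^{-2aj}\|G\|_2 .
\]

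\textbf{Step 2 ($L^1\to L^\infty$ bound).} Directly from \eqref{dispersive},
\[
\|P_j(\mu_t\ast\overline\mu_s)\ast G\|_{L^\infty}\lesssim 2^{(d-2a)j}(1+2^j|t-s|)^{-\delta}\|G\|_{L^1}.
\]

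\textbf{Step 3 (Riesz--Thorin).} Interpolate Steps 1 and 2 with $\theta=2/r=\frac{\delta}{\delta+1}$, so that $\frac1{r'}-\frac1r=\theta=\frac{\delta}{\delta+1}$. This yields the $L^{r'}\to L^r$ bound
\[
2^{-2aj(1-\theta)}2^{(d-2a)j\theta}(1+2^j|t-s|)^{-\delta\theta}
=2^{\frac{d\delta-2a(\delta+1)}{\delta+1}j}(1+2^j|t-s|)^{-\frac{\delta^2}{\delta+1}} .
\]

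\textbf{Step 4 (time integration).} In $t$ we need the kernel $(1+2^j|t-s|)^{-\gamma}$ with $\gamma=\frac{\delta^2}{\delta+1}$ to map $L^{r'}(I)\to L^r(I)$. Hardy--Littlewood--Sobolev requires exponent $1-(\frac1{r'}-\frac1r)=\frac1{\delta+1}$, whereas $\gamma$ is generally not equal to this, so the numerology does not work as set up. The fix is to pick a different interpolation parameter: a time decay of $|t-s|^{-\delta\theta}$ matching HLS needs $\delta\theta=1-\theta$, i.e.\ $\theta=\frac1{\delta+1}$. Then $\frac1{r'}-\frac1r=\theta$ gives $r=\frac{2(\delta+1)}{\delta}$, which is exactly the exponent in \eqref{eq:stri}; so Step 3 should be rerun with $\theta=\frac1{\delta+1}$ rather than $\frac{\delta}{\delta+1}$.

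With $\theta=\frac1{\delta+1}$, the kernel bound is $2^{-2aj\frac{\delta}{\delta+1}}2^{(d-2a)j\frac1{\delta+1}}(1+2^j|t-s|)^{-\frac{\delta}{\delta+1}}$. Now use $(1+2^j|t-s|)^{-\frac{\delta}{\delta+1}}\le 2^{-\frac{\delta}{\delta+1}j}|t-s|^{-\frac{\delta}{\delta+1}}$ and apply one-dimensional HLS, with kernel exponent $\frac{\delta}{\delta+1}=1-\theta$, together with Minkowski's inequality in $x$. The total power is
\[
\tfrac{d-2a-2a\delta-\delta}{\delta+1}=\tfrac{d-\delta-2a(\delta+1)}{\delta+1},
\]
which is exactly twice the exponent in \eqref{eq:stri}. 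Taking square roots via $TT^\ast$ completes the proof.

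The main obstacle is this exponent bookkeeping: the interpolation parameter must be chosen so that the time decay is exactly critical for HLS, and one must check that $\frac{\delta}{\delta+1}<1$ (true since $\delta>0$), so HLS applies on the bounded interval $I$.
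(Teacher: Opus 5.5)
Your argument is correct and is essentially the paper's proof: a $TT^\ast$ reduction, the $L^2$ bound $2^{-2aj}$ from \eqref{decay}, the $L^1\to L^\infty$ bound $2^{(d-2a-\delta)j}|t-s|^{-\delta}$ from \eqref{dispersive}, interpolation to an $L^{r'}\to L^r$ bound, and one-dimensional Hardy--Littlewood--Sobolev in $t$ with kernel exponent $\frac{\delta}{\delta+1}$. The one flaw is the discarded first pass of Step 3, which was an arithmetic slip rather than a different choice: for an $L^{r'}\to L^r$ bound, Riesz--Thorin forces the weight on the $L^1\to L^\infty$ estimate to be $\theta=1-\frac2r=\frac1{\delta+1}$, so state Step 3 with that value from the start.
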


\begin{proof}
Let us set 
\[
E_j f(x,t) = \int e^{i x\cdot \xi} \beta(2^{-j}|\xi|) \widehat{\mu}(t\xi) f(\xi) d\xi, 
\]
so we have $E_j f (x,t) = P_j f^\vee \ast \mu_t$.  By Plancherel's theorem and duality, \eqref{eq:stri} is equivalent to 
\Be\label{TT*}   
\| E_j^\ast  g\|_{L^2(\R^d)} \le C 2^{ \frac{d-\delta-2a(\delta+1)}{2(\delta+1)} j} \|g\|_{L^{\frac{2(\delta+1)}{\delta+2}}_{x,t}(\mathbb R^d\times I)}. 
\Ee
Here $E_j^\ast$ denotes the adjoint operator of $E_j$, which is given by
\[E_j^\ast g(\xi) =  \beta(2^{-j}|\xi|)  \iint_{\mathbb R^d\times I} e^{-i y\cdot \xi}\,  \overline{\widehat\mu}(s\xi)  g(y,s) dy ds .\]

Thus, we have 
\begin{align*}
E_j E_j^\ast g(x,t) & = \iint_{\mathbb R^d\times I}  \int e^{i (x-y)\cdot \xi} \beta^2(2^{-j}|\xi|) \widehat \mu(t\xi) \overline{\widehat\mu}(s\xi) d\xi g(y,s) dy ds .
\end{align*}
Since $\| E_j^\ast  g\|_{L^2(\R^d)}^2 = \langle   E_j E_j^\ast  g, g \rangle$, by duality  \eqref{TT*} follows from the estimate 
\begin{equation}\label{dual}
\| E_j E_j^\ast g\|_{L^{\frac{2(\delta+1)}{\delta}}_{x,t}(\mathbb R^d\times I)} \le 2^{ \frac{d-\delta-2a(\delta+1)}{ \delta+1 } j} \| g\|_{L^{\frac{2(\delta+1)}{\delta+2}}_{x,t} (\R^d\times I)} .
\end{equation}

For each $t,s\in I$, we define  
\[
U(t,s) h (x) = \iint e^{i (x-y)\cdot \xi} \beta^2(2^{-j}|\xi|) \widehat \mu(t\xi) \overline{\widehat\mu}(s\xi) d\xi\, h(y) dy .
\]
Note $U(t,s) h(x) = K_j(t,s) \ast h(x)$ where
\[
K_j{(t,s)} (x) =  \int e^{i x\cdot \xi} \beta^2(2^{-j}|\xi|) \widehat \mu(t\xi) \overline{\widehat\mu}(s\xi) d\xi  .
\]
From \eqref{decay} note that  $ \| \widehat{K_j{(t,s)}}   \|_{L^\infty(\R^d)}  = \| \beta(2^{-j}|\xi|) \widehat \mu(t\xi) \overline{\widehat\mu}(s\xi) \|_{L^\infty(\R^d)} \lesssim {2^{-2aj}}$. 
Thus, it follows by Plancherel's theorem that
\[
\| {U(t,s) h } \|_{L^2(\R^d)}  \lesssim 2^{-2aj} \|h\|_{L^2(\R^d)}, \quad t, s\in I. 
\]
On the other hand, the assumption \eqref{dispersive} gives  a dispersive estimate
\[
\| {U(t,s) h} \|_{L^\infty(\R^d)}  \lesssim 2^{(d-2a-\delta)j} |t-s|^{-\delta} \|h\|_{L^1(\R^d)},  \quad t, s\in I. 
\]
Interpolating  these two estimates gives 
\begin{equation}
\label{interpol}
\| {U(t,s) h} \|_{L^q(\R^d)}  \lesssim 2^{(d-2a-\delta - (d-\delta)\frac2q)j} |t-s|^{-\delta(1-\frac 2q)} \|h\|_{L^{q'}(\R^d)}, \quad t, s\in I
\end{equation}
for $q\ge 2$. 

Noting that $ E_j E_j^\ast g= \int_I U(t,s)  g_s ds$, where $g_s= g(\cdot, s)$,  by \eqref{interpol} we have 
\begin{align*}
\| E_j E_j^\ast g\|_{L^q_{x,t}(\R^d\times I)} & \le \bigg\| \int_I \| U(t,s)  g_s(\cdot) \|_{L^q(\R^d)} ds \bigg\|_{L^q_t(I)} \\
 & \lesssim  2^{ ( (d-\delta)(1-\frac2q) - 2a  )j} \bigg\| \int_I |t-s|^{-\delta (1-\frac 2q)} \|g_s(\cdot)\|_{L^{q'}(\R^d)} ds \bigg\|_{L^q_t(I)}. 
 \end{align*} 
By Hardy--Littlewood--Sobolev inequality, we obtain 
\begin{align*}
\| E_j E_j^\ast g\|_{L^q_{x,t}(\R^d\times I)}   \lesssim  2^{ ( (d-\delta)(1-\frac2q) - 2a  )j} \| g \|_{L^{q'}_{x,t}(\R^d\times I)} 
\end{align*}
when $1/q = 1/q' - (1- \delta(1-\frac2q))$ i.e., $q = 2(\delta+1)/\delta$. 
Hence we get \eqref{dual}. 
\end{proof}

Once we have the estimate  \eqref{eq:stri},  Theorem \ref{pqr} can be proved in the similar manner as before.

\begin{proof}[Proof of Theorem \ref{pqr}] 
Recalling \eqref{locj},  it is enough to consider the frequency localized operators $M_j^{loc}$ for $j\ge 0$.  

Since we already have the estimate \eqref{pqrw} for  $P$ and $Q$ from Theorem   \ref{thm:pq},  
from the perspective of interpolation, it is sufficient to show  the restricted weak-type estimate \eqref{pqrw} for $(1/p,1/q) = R$ or  $(1/p,1/q)$ arbitrarily  close to $R$. To this end we consider 
the cases $ 2a(\delta+1) >d$, $2a(\delta+1) < d$, and $2a(\delta+1) = d$, separately.  

By Proposition \ref{prop:stri} and the inequality \eqref{ftc}, we have 
\begin{equation}\label{2q}
 \| M_j^{loc}   f  \|_{L^{\frac{2(\delta+1)}{\delta}}(\R^d)} \le C 2^{- \frac{2a(\delta+1)-d}{2(\delta+1)}  j} \| f\|_{L^{2}(\R^d)} .
\end{equation}  
 
If $ 2a(\delta+1) >d$,  we apply Lemma \ref{B-Sum-Tri}  to the estimates  \eqref{10} and  \eqref{2q}  with
taking $p_0=1, q_0=\infty$,  $\epsilon_0 = d-b$, $p_1=2, q_1=\frac{2(\delta+1)}{\delta}$, and $\epsilon_1 =  \frac{2a(\delta+1)-d}{2(\delta+1)} $. 
Consequently, we get  
\begin{equation}\label{rwmj}
\Big\|\sum_{j\ge 0} M_j^{loc}  f \big\|_{L^{q,\infty}(\R^d)} \lesssim \| f\|_{L^{p,1}(\R^d)}.
\end{equation}
for $(1/p,1/q) = R$, which gives \eqref{pqrw} for $(1/p,1/q) = R= R_1$.

If $ 2a(\delta+1) < d$,  instead of  the estimate \eqref{10}   we  apply Lemma \ref{B-Sum-Tri} to  \eqref{2q} and \eqref{mjL2-local}.
Thus,  taking $p_0=2, q_0=\frac{2(\delta+1)}{\delta},$  $\epsilon_0 =\frac{d-2a(\delta+1)}{2(\delta+1)}>0$,   $p_1=q_1=2$, and $\epsilon_1 = a- 1/2 $ in Lemma \ref{B-Sum-Tri},  we get \eqref{rwmj} for $(1/p,1/q) = R=R_2$.  
 
Finally, for the case $ 2a(\delta+1) = d$, we have $R_2 = (\frac12,\frac{\delta}{2(\delta+1)})$. Thus,  interpolating   \eqref{2q} and the trivial $L^\infty$ bound,   we have    
\[  \| M_j^{loc} f \|_{L^{q }(\R^d)} \le C  \| f\|_{L^{p}(\R^d)}, \quad (1/p,1/q) \in [O,R_2].\]
Further interpolation with \eqref{mjL2-local} yields 
$
\| M_j^{loc} f \|_{L^{ q }(\R^d)} \le C 2^{-\tau  j} \| f\|_{L^{ p}(\R^d)}
$
for some $\tau>0$, provided that $(1/p,1/q)$ is contained in an open triangular region  $\mathcal T$ with vertices $O$, $(1/2,1/2)$, and $R_2$. 
Recalling \eqref{locj},  by summation along $j$  we obtain \eqref{pqmax} for  $(1/p,1/q) \in\mathcal T$.  Additional interpolation with 
the already established estimates in Theorem  \ref{thm:pq} gives \eqref{pqmax} for  $(1/p,1/q)$ contained in the interior of the convex hull of  
$O$, $P$, $Q$, and  $R$.  \end{proof}

\bigskip

\section*{Acknowledgement} 
This work was supported by the National Research Foundation of Korea (NRF) through Grant Nos. RS-2025-24533387 (Ham), RS-2024-00342160 (Ham, Kah, Lee), and ARC DP 260100485 (Li).

\bigskip

\bibliographystyle{plain}

\end{document}